\documentclass[12pt,twoside]{amsart}%arXiv submit/1145652, 23 December 2014
\usepackage{amsmath, amsthm, amscd, amsfonts, amssymb, float, graphicx, color
, soul }
\usepackage[bookmarksnumbered, %colorlinks
, plainpages]{hyperref}
\usepackage{graphicx}
\usepackage{epstopdf}
\usepackage{rotating}
\usepackage{mathtools}
\input xy
\xyoption{all}
\addtolength{\textwidth}{3cm}
\addtolength{\hoffset}{-1.5cm}
\addtolength{\textheight}{2cm}
\addtolength{\voffset}{-1cm}
%\sethlcolor{green}
\setcounter{page}{1}

\newtheorem{theorem}{Theorem}
\usepackage[bottom]{footmisc}
\newtheorem{lemma}[theorem]{Lemma}
\newtheorem{proposition}[theorem]{Proposition}

\newtheorem{example}[theorem]{Example}

%\sethlcolor{blue}
\def\pn{\par\noindent}

\def\CA{{\rm CA}}
\def\cent{{\rm Cent}}
\def\diam{{\rm diam}}
\date{}
\title{On $n$-centralizer $\CA$-groups}
\author{Mohammad A. Iranmanesh$^*$, Mohammad Hossein Zareian}
\thanks{{\scriptsize
\hskip -0.4 true cm MSC(2010): Primary: 20E99, Secondary: 05C25
\newline Keywords: $\CA$-group, $m$-centralizer group, non-abelian group.\\
$*$Corresponding author}}
\begin{document}
%\openup 1.5\jot
\maketitle

\begin{abstract}
Let $G$ be a finite non-abelian group and $m=|G|/|Z(G)|$. In this paper we investigate $m$-centralizer group $G$ with cyclic center and
we will prove that if $G$ is a finite non-abelian $m$-centralizer $\CA$-group,
then there exists an integer $r>1$ such that $m=2^r.$ It is also prove that if $G$ is an $m$-centralizer non-abelian finite group
which is not a $\CA$-group and its derived subgroup $G'$ is of order 2, then there exists an integer $s>1$ such that $m=2^{2s}.$
\end{abstract}

\section{Introduction}%\label{sec:introdaction}
%\subsection{Background and motivation.}
Throughout this paper all groups are assumed to be finite. By $Z(G)$, $C_G(x)$, $\cent(G)$ and $x^G$ we denote the
center of the group $G$, the centralizer of $x$, the set of centralizers of the group $G$ and the conjugacy class of $x\in G$ respectively.
A group $G$ is called {\it $n$-centralizer} if $|\cent(G)|=n.$
The influence of $|\cent(G)|$ on the group $G$ has investigated in \cite{AJH, A, AT, BS, JR}.
It is clear by definition that a group $G$ is 1-centralizer if and only if it is abelian. Belcastro and Sherman show that
there is no finite $n$-centralizer groups for $n\in \{2,3\}.$

The {\it non-commuting graph} $\Gamma(G)$ with respect to $G$ is a graph with vertex set $G\setminus Z(G)$
and two distinct vertices $x$ and $y,$ are adjacent whenever $[x, y]\neq 1.$
It is proved in \cite{AAM} that $\Gamma(G)$ is connected with diameter $2$ and girth $3$.
A non-abelian group $G$ is called a {\it $\CA$-group} if $C_G(x)$ is abelian for all $x\in G\setminus Z(G)$. We
refer the readers to see \cite{S} for more properties of this class of finite groups.
A group $G$ is called an {\it extra-especial $p$-group} if $G/Z(G)$ is an elementary abelian $p$-group
and $|Z(G)|=p$ where $p$ is a prime number.

Brady in his Ph.D. thesis \cite{BR} consider the finite nilpotent groups of class two with cyclic centre and
his results were further exploited by Brady et al. and Leong (see \cite{BBC,L}).
%It is known, see Leong (1974a), that a finite $q$-group of nilpotency class two with
%cyclic centre is a central product either of two-generator subgroups with cyclic
%centre or of two-generator subgroups with cyclic centre and a cyclic subgroup,
%and that the finite $q$-groups of class two on two generators with cyclic centre.
\begin{theorem} \cite[Theorem 2.2]{BBC} The $q$-groups of class 2 on two generators with cyclic
centre comprise the following list:
\[ Q(n,r)=\langle a,b: a^{q^{n}}=b^{q^{r}}=1, a^{q^{n-r}}=[a,b]\rangle;\] %\end{align*}
$2r\leq n;$
\begin{align*} Q(n,r)=\langle a,b: a^{q^{n}}=b^{q^{r}}=1, a^{q^{r}}=[a,b]^{q^{2r-n}}, [a,b,a]=[a,b,b]=1\rangle;\end{align*}
$r\leq n <2r,$

and if $q=2$ we have as well
\begin{align*} R(n)=\langle a,b: a^{2^{n+1}}=b^{2^{n+1}}=1, a^{2^{n}}=[a,b]^{2^{n-1}}=b^{2^{n}}, [a,b,a]=[a,b,b]=1\rangle,\end{align*}
$n\geq 1$.
\end{theorem}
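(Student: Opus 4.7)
The plan is to deduce the presentation of $G$ by analysing its central series together with the bilinear commutator pairing. Write $G = \langle a, b\rangle$, $c = [a,b]$, and $\bar G = G/Z(G)$. Since $G$ has class $2$, the assignment $(\bar x, \bar y) \mapsto [x,y]$ is a well-defined bilinear alternating pairing $\bar G \times \bar G \to G'$; since $a, b$ generate $G$, this pairing is generated by $c$, so $G' = \langle c\rangle$ is a cyclic subgroup of the cyclic centre $Z(G)$.

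The first step is to show that $\bar G$ is homocyclic of rank $2$ with exponent $q^r := |G'|$. Choose $a, b$ so that $\bar G = \langle \bar a\rangle \times \langle \bar b\rangle$ with $|\bar a| = q^{n'} \geq |\bar b| = q^{r'}$. The identity $c^{q^{r'}} = [a, b^{q^{r'}}] = 1$ gives $|c| \mid q^{r'}$; conversely, if $c^{q^s} = 1$ then $a^{q^s}$ centralises both $a$ and $b$ and so lies in $Z(G)$, forcing $s \geq n'$. Together with $n' \geq r'$ this yields $n' = r' = \log_q|c|$. Next, write $Z(G) = \langle z\rangle$ of order $q^m$ (so $m \geq r$ as $G' \leq Z(G)$), choose $z$ with $c = z^{q^{m-r}}$, and set $a^{q^r} = z^\alpha$, $b^{q^r} = z^\beta$. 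The classification now reduces to normalising the pair $(\alpha, \beta) \in (\mathbb{Z}/q^m)^2$ via substitutions of the form $a \mapsto a\,b^{i} z^{j}$ and $b \mapsto b\,a^{k} z^{\ell}$, expanded by the class-$2$ identity $(xy)^{q^r} = x^{q^r} y^{q^r} [y,x]^{\binom{q^r}{2}}$.

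The case analysis splits into three regimes, according to $m$ relative to $r$ and the parity of $q$. When $m \geq 2r$, the substitutions are rich enough to reduce $(\alpha, \beta)$ to $(1, 0)$, so that $z = a^{q^r}$ and $c = a^{q^{m-r}}$; setting $n := m$ recovers the first family $Q(n, r)$ with $2r \leq n$. When $r \leq m < 2r$, the substitutions cannot wholly disentangle $z$ from $c$, and a residual mixed relation $a^{q^r} = c^{q^{2r-n}}$ must be kept for an appropriate $n \in [r, 2r)$, yielding the second family $Q(n, r)$. For $q = 2$, the coefficient $\binom{2^r}{2} = 2^{r-1}(2^r - 1)$ is divisible only by $2^{r-1}$, and the resulting extra factor $c^{2^{r-1}}$ introduced by each substitution obstructs the reduction in a specific symmetric configuration; the canonical form surviving this obstruction is the exceptional group $R(n)$ with its characteristic identification $a^{2^{n}} = b^{2^{n}} = c^{2^{n-1}}$.

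The main technical obstacle is tracking how the substitutions shift $\alpha$ and $\beta$ simultaneously through the Hall--Petresco expansion: for odd $q$ the available shifts span $(\mathbb{Z}/q^m)^2$ densely enough to reach both $Q$-forms, whereas for $q = 2$ they become linearly dependent in a controlled way, creating the extra one-parameter family $R(n)$. Once a canonical form is reached, verifying that each listed presentation genuinely defines a $q$-group of class $2$, two-generated, with cyclic centre is a direct computation: one determines $Z$ and $G'$ from the relations and checks that the stated constraints are precisely those that hold.
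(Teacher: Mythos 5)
Note first that the paper itself offers no proof of this statement: it is quoted verbatim from Brady--Bryce--Cossey \cite[Theorem 2.2]{BBC}, so there is no in-paper argument to compare against. Judged on its own terms, your strategy (establish $G'=\langle c\rangle\leq Z(G)$ cyclic, show $G/Z(G)$ is homocyclic of rank $2$ and exponent $|G'|=q^r$, record $a^{q^r}=z^{\alpha}$, $b^{q^r}=z^{\beta}$ in the cyclic centre, and normalise $(\alpha,\beta)$ under generator substitutions via the class-$2$ power identity) is the right one and is essentially how such classifications are carried out. The first step is sound, modulo a small circularity: you should first prove homocyclicity using \emph{some} basis $\bar u,\bar v$ of $\bar G$ (noting $G'=\langle[u,v]\rangle$ by bilinearity, so $|c|$ divides the order of $\bar v$ while the exponent of $\bar G$ divides $|c|$), and only then conclude that your chosen generating pair $\bar a,\bar b$ is automatically a basis because a minimal generating set of a homocyclic group is a basis.

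The genuine gap is in the case analysis, which is both materially wrong as stated and missing its core computation. Your dichotomy ``$m\geq 2r$ versus $r\leq m<2r$'' does not match the actual invariants: for the first family one computes $Z(Q(n,r))=\langle a^{q^r}\rangle$ of order $q^{n-r}$, so $m=n-r$ and $n=m+r$ (your identification $n:=m$ is off by $r$), and $m$ ranges over all values $\geq r$; whereas the \emph{entire} second family and every $R(n)$ have $Z(G)=G'$, i.e.\ $m=r$ exactly, so the regime ``$r\leq m<2r$'' with $m>r$ is empty for them. The correct dividing line is the $q$-adic valuation of $(\alpha,\beta)$ relative to $m-r$ --- whether $a^{q^r}$ (after a change of basis) generates $Z(G)$, or whether $c$ does and $a^{q^r},b^{q^r}$ are proper powers of it --- not the size of $m$ against $2r$. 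Finally, the heart of the proof, namely computing the orbits of $(\alpha,\beta)\in(\mathbb{Z}/q^m)^2$ under the substitutions $a\mapsto ab^iz^j$, $b\mapsto ba^kz^\ell$ (an affine action with translation part $c^{2^{r-1}}$ when $q=2$ and trivial translation part when $q$ is odd, since $q^r\mid\binom{q^r}{2}$ for $q$ odd) and showing these orbits are represented exactly by the two $Q$-families plus $R(n)$, is asserted rather than carried out; in particular no argument is given that the $q=2$ obstruction produces precisely the one extra family $R(n)$ with $\alpha=\beta=2^{n-1}$ and nothing else. As it stands the proposal is a plausible outline with incorrect case boundaries, not a proof.
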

Note that $Q(n, 0)$ is in fact the cyclic group of order $q^n, n > 0$.

\begin{lemma}\label{L3}\cite[Theorems A, B]{L}
\begin{enumerate}
\item[1)] Every finite 2-group $G$ of class two with cyclic center, either has the central decomposition:
\[G\cong Q(n_1,r_1)...Q(n_{\alpha}, r_{\alpha}) Q(\ell,\ell)^{\epsilon_l}...Q(1,1)^{\epsilon_1},\]
where $\alpha \geq 0, \epsilon_i \geq 0, i=1,...,\ell,$
\[n_1>...> n_{\alpha} > \ell\geq 1,~ n_{\alpha} > r_1 >...> r_{\alpha} \geq 0, ~1 < n_1 - r_1< ...< n_{\alpha}-r_{\alpha},\]
or else it has the central decomposition:
$$G\cong R(n) Q(\ell,\ell)^{\epsilon_{\ell}} ... Q(1,1)^{\epsilon_1},$$
where $n\geq \ell\geq 1, \epsilon_{i} \geq 0, i=1,...,\ell.$
\item[2)] The above canonical decomposition is unique up to isomorphism.
\end{enumerate}
\end{lemma}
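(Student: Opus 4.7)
The plan is to extract a symplectic-type decomposition of $G/Z(G)$ under the commutator pairing, identify each hyperbolic block with a two-generator subgroup of the type listed in the classification theorem quoted above, and then verify that the resulting central product is unique up to isomorphism.

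First I would observe that because $G$ has class two, the commutator $[\,,\,]$ factors through a $\mathbb{Z}$-bilinear alternating form $G/Z(G)\times G/Z(G)\to G'$, and since $Z(G)$ is cyclic the target $G'$ is a cyclic $2$-group. Treating $G/Z(G)$ as a finite abelian $2$-group and applying a symplectic normalisation of this form, I would extract hyperbolic pairs $(\bar{a}_i,\bar{b}_i)$ in $G/Z(G)$ whose images generate the non-degenerate part of the pairing. Lifting each pair to $a_i,b_i\in G$ and setting $H_i=\langle a_i,b_i\rangle$, each $H_i$ is a two-generator $2$-group of class two with cyclic centre, so the classification above forces $H_i\cong Q(n_i,r_i)$, or $H_i\cong R(n)$ in the exceptional case arising when the pair must incorporate the self-paired relation $a^{2^n}=[a,b]^{2^{n-1}}=b^{2^n}$.

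Next I would check that distinct $H_i,H_j$ commute elementwise (by the orthogonality of the chosen hyperbolic basis) and that $H_i\cap H_j\subseteq Z(G)$, so $G$ is the central product of the $H_i$; any leftover cyclic central summands can be absorbed into one of the $H_i$ by enlarging a generator. Ordering the factors to enforce strictly decreasing $n_i$ (and, separately, strictly decreasing $r_i$ with $1<n_i-r_i$ strictly increasing) uses the flexibility of rechoosing the symplectic basis, while the remaining degenerate blocks with $n=r$ --- those on which the pairing is ``diagonal'' with equal exponent components --- collect as the powers $Q(\ell,\ell)^{\epsilon_\ell}\cdots Q(1,1)^{\epsilon_1}$.

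The hard part is uniqueness, that is, part (2). Different symplectic bases could in principle lead to different-looking decompositions, so one needs invariants of $G$ itself that determine the multiset of $(n_i,r_i)$ and the exponents $\epsilon_j$. I would read these off from three pieces of data: the elementary divisor sequence of the abelian group $G/Z(G)$ (which controls the $n_i$ and the contribution of the $Q(\ell,\ell)$-blocks), the order of $G'$ together with the orders of individual commutators $[a,b]$ for elements corresponding to a hyperbolic pair (which controls the defects $n_i-r_i$), and the exponent of $G$. The most delicate step is distinguishing a pure $Q$-type decomposition from one containing an $R(n)$ summand: this requires detecting whether $G$ has an element $x$ with $x^{2^n}\in G'\setminus\{1\}$ coinciding with the square of another such element in a prescribed way, a property which holds in $R(n)$ but fails in any central product built solely from $Q(n,r)$'s.
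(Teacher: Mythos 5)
The paper does not prove this statement at all: it is quoted verbatim from Leong's Theorems A and B (reference \cite{L}, building on Brady's thesis \cite{BR} and \cite{BBC}), so there is no internal proof to compare yours against. Judged on its own, your outline follows the right general strategy --- the same one underlying Brady's and Leong's work, namely viewing the commutator as an alternating biadditive pairing $G/Z(G)\times G/Z(G)\to G'$ with cyclic target and splitting off two-generator ``hyperbolic'' blocks --- but it skips precisely the steps where all the content of the theorem lives.

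Three gaps are substantive. First, ``applying a symplectic normalisation'' is not an available black box here: $G/Z(G)$ is a finite abelian $2$-group of mixed exponent, the form takes values in a cyclic $2$-group rather than a field, and for $q=2$ an alternating form need not admit a hyperbolic basis at the top exponent level. That failure is exactly what forces the exceptional factor $R(n)$, and it is why $R(n)$ occurs only for $q=2$; your sketch treats $R(n)$ as an afterthought instead of deriving it from the obstruction. Likewise, the numerical constraints $n_1>\dots>n_\alpha$, $r_1>\dots>r_\alpha$, $1<n_1-r_1<\dots<n_\alpha-r_\alpha$ are not cosmetic ordering conventions: they are achieved by repeatedly rechoosing generators (absorbing one block into another when the inequalities fail), and that reduction is most of the existence proof. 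Second, lifting a hyperbolic pair to $a_i,b_i\in G$ and setting $H_i=\langle a_i,b_i\rangle$ does not automatically produce a subgroup with cyclic centre whose central subgroup matches the amalgamated centre of the product, and ``any leftover cyclic central summands can be absorbed by enlarging a generator'' is an assertion, not an argument --- it is the reason the cyclic factor $Q(n,0)$ must be allowed in the list. Third, for uniqueness the invariants you name (elementary divisors of $G/Z(G)$, $|G'|$, exponent of $G$) are visibly insufficient to separate decompositions with the same abelianised data but different pairings $(n_i,r_i)$; Leong needs finer invariants and a genuine cancellation argument for central products, which cannot be waved in with ``I would read these off.'' As it stands the proposal is a plausible roadmap to the literature's proof, not a proof.
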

%In this paper we classify $\CA$-groups $G$ in which $n=|G|/|Z(G)$ and groups $G$ with cyclic center whose $n=|G|/|Z(G).$
%\subsection{Notation and statement of results.}
The main purpose of this paper is to study $m$-centralizer $\CA$-groups where $m=|G|/|Z(G)|$ and $m\neq 2, 3$.
We will prove the following theorems.

\begin{theorem}\label{T1}
Let $G$ be a non-abelian group. The following are equivalent
\begin{enumerate}
\item[1)] $G$ is an $m$-centralizer $\CA$-group and $m\neq 2, 3$
\item[2)] $G=A\times P$ where $P$ is a 2-group, $P$ is a $\CA$-group and $|\cent(P)|=|P|/|Z(P)|.$
\item[3)] $G=A\times P$ where $C_P(x)=Z(P)\cup xZ(P),$ for all $x\in P\setminus Z(P).$
\end{enumerate}
\end{theorem}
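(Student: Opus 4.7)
The plan is to prove the equivalences cyclically, $(1)\Rightarrow(2)\Rightarrow(3)\Rightarrow(1)$. The central tool is a partition identity valid in any $\CA$-group $H$: because each $C_H(x)$ with $x\notin Z(H)$ is abelian, the relation ``commutes with'' is an equivalence on $H\setminus Z(H)$, transitivity following from the fact that $y\in C_H(x)$ forces $C_H(x)\subseteq C_H(y)$ whenever $C_H(x)$ is abelian. Its equivalence classes are exactly the sets $C_H(x)\setminus Z(H)$, and since $C_H(x)=C_H(y)$ iff $x$ and $y$ commute, there are $|\cent(H)|-1$ such classes, yielding
\[
|H|-|Z(H)|\;=\;\sum_{i=1}^{|\cent(H)|-1}\bigl(|C_i|-|Z(H)|\bigr),
\]
where the $C_i$ are the distinct centralizers of non-central elements.

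For $(1)\Rightarrow(2)$ I would apply this identity with $H=G$. Each of the $m-1$ summands satisfies $|C_i|-|Z(G)|\geq|Z(G)|$, while their total equals $|G|-|Z(G)|=(m-1)|Z(G)|$ by the hypothesis $m=|G/Z(G)|$. Hence every summand equals $|Z(G)|$, so $|C_G(x)|=2|Z(G)|$ for every non-central $x$; in particular $x^2\in Z(G)$ for all $x\in G$, so $G/Z(G)$ has exponent two and is therefore an elementary abelian $2$-group. Thus $G$ is nilpotent of class $2$; writing $G=A\times P$ with $P$ its Sylow $2$-subgroup and $A$ its Hall $2'$-complement, the fact that $A/Z(A)$ embeds into the $2$-group $G/Z(G)$ while having odd order forces $A=Z(A)$, i.e.\ $A$ is abelian. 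The identity $C_G((a,x))=A\times C_P(x)$ then transfers the $\CA$-property from $G$ to $P$ and gives $|\cent(G)|=|\cent(P)|$ and $|G/Z(G)|=|P/Z(P)|$, which is precisely (2).

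For $(2)\Rightarrow(3)$ the same partition argument applied to $H=P$ converts $|\cent(P)|=|P/Z(P)|$ into the structural statement $C_P(x)=Z(P)\cup xZ(P)$. For $(3)\Rightarrow(1)$, the formula $C_G((a,x))=A\times C_P(x)=Z(G)\cup(a,x)Z(G)$ exhibits each non-central centralizer as a union of two cosets of $Z(G)$, hence abelian, so $G$ is a $\CA$-group; a direct count of cosets then gives $|\cent(G)|=|G/Z(G)|=m$, and $m\neq 2,3$ is automatic because $G$ non-abelian forces $G/Z(G)$ to be non-cyclic and hence of order at least $4$.

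The main obstacle is the very first step: converting the numerical equality $|\cent(G)|=|G/Z(G)|$ into the strong structural statement $|C_G(x)/Z(G)|=2$ for every non-central $x$. Once that is established, the exponent-two quotient delivers the elementary abelian $2$-group structure on $G/Z(G)$, and the rest of the argument reduces to the Sylow--Hall decomposition of the resulting nilpotent group together with routine bookkeeping across the direct product $A\times P$.
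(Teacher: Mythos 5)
Your argument is correct, and its overall skeleton --- a cyclic chain of implications pivoting on the decomposition $G=A\times P$ and on transporting centralizer data across the direct product --- matches the paper's, but the engine driving the key step is genuinely different. The paper obtains $C_G(x)=Z(G)\cup xZ(G)$ from its $\sim_1$/$\sim_2$ machinery (Lemmas \ref{L6}, \ref{L7} and \ref{L10}), imports the splitting $G=A\times P$ from the regularity of the non-commuting graph (Lemma \ref{L2}, quoted from \cite{AAM}), and imports the elementary abelian $2$-group structure of $G/Z(G)$ from \cite{JMR} (Lemma \ref{L5}). You replace all of this with a single self-contained count: in a $\CA$-group the proper centralizers partition $G\setminus Z(G)$ into $|\cent(G)|-1$ blocks $C_i\setminus Z(G)$, each of size at least $|Z(G)|$ because $Z(G)\cup x_iZ(G)\subseteq C_i$, and the hypothesis $|\cent(G)|=|G|/|Z(G)|$ forces every block to have size exactly $|Z(G)|$, i.e.\ $|C_G(x)|=2|Z(G)|$ for all non-central $x$; then $x^2\in Z(G)$ yields the exponent-$2$ quotient and nilpotency of class $2$ directly, so the Sylow--Hall decomposition replaces Lemma \ref{L2}. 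What this buys is a shorter, more elementary and essentially self-contained proof of $(1)\Rightarrow(2)$ and $(2)\Rightarrow(3)$ that does not lean on the cited external results; nothing of substance is lost. One caveat you share with the paper: in statements (2) and (3) the factor $A$ must be read as abelian (otherwise $(3)\Rightarrow(1)$ is false), and your computation $C_G((a,x))=A\times C_P(x)=Z(G)\cup(a,x)Z(G)$ indeed uses $Z(G)=A\times Z(P)$.
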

\begin{theorem}\label{T2}
Let $G$ be a non-abelian $m$-centralizer group and $Z(G)$ is cyclic. Then
\begin{enumerate}
\item[1)] $G=A\times P$,  $A\cong \mathbb{Z}_t, t$ is odd and $P$ is a 2-group with
cyclic center and the following central decomposition:
$$Q(n,0)Q(1,1)^{\epsilon}~where~\epsilon\geq 1, n> 1~or$$
$$Q(n,1)Q(1,1)^{\epsilon}~where~\epsilon\geq 0, n> 2~or$$
$$Q(1,1)^{\epsilon}~where~\epsilon> 1,~or~R(1)Q(1,1)^{\epsilon}~where~\epsilon\geq 0.$$
\item[2)] If $G$ is a $\CA$-group, then $G=A\times P$, $A\cong \mathbb{Z}_t, t$ is odd and $P$ is a 2-group
with cyclic center which has the following central decomposition:
$$Q(n,0)Q(1,1), n> 1~or~Q(n,1),n>2~or~Q(1,1)~or~R(1).$$
\end{enumerate}

\end{theorem}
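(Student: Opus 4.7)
The plan is to combine Theorem~\ref{T1} (for part~2) or the analogous structural result for non-$\CA$ $m$-centralizer groups stated in the abstract (for part~1) with Leong's classification in Lemma~\ref{L3}, then use the defining equation $|\cent(G)|=|G|/|Z(G)|$ to cut down to the listed decompositions.

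First I would establish $G=A\times P$ with $A\cong \mathbb{Z}_{t}$ of odd order and $P$ a $2$-group. If $G$ is a $\CA$-group this is exactly Theorem~\ref{T1}; otherwise the companion theorem (for non-$\CA$ $m$-centralizer groups with $|G'|=2$) gives $m=2^{2s}$, so in either case $G/Z(G)$ is a $2$-group. Combined with the cyclicity of $Z(G)$, this forces the odd part of $G$ to lie inside $Z(G)$ and be cyclic, hence equal to some $\mathbb{Z}_{t}$, and the Sylow $2$-subgroup $P$ inherits the condition $|\cent(P)|=|P|/|Z(P)|$ with cyclic centre.

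Next I would show that $P$ has nilpotency class at most $2$, so that Lemma~\ref{L3} applies. For part~2 this is a standard fact about $\CA$ $p$-groups: every non-central centralizer is abelian and contains $[P,P]$, forcing $[P,P]\le Z(P)$. For part~1 the argument splits: either $P$ is itself a $\CA$-group (same reasoning) or $|P'|=2$, in which case class~$2$ is automatic. In both cases Lemma~\ref{L3} supplies a central decomposition of $P$ into $Q(n,r)$'s and possibly an $R(n)$.

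The main work is then case analysis on these decompositions. For each candidate I would compute $|\cent(P)|$ and compare it with $|P|/|Z(P)|$; equality must hold. In a central product of $2$-groups of class~$2$, centralizers are computed component-wise, and I expect that any factor $Q(n,r)$ with $r\ge 2$, or any $R(n)$ with $n\ge 2$, produces pairs of non-central cosets sharing a centralizer, strictly decreasing $|\cent(P)|$ below $|P|/|Z(P)|$. Eliminating these leaves exactly the four families listed in part~1. For part~2 the $\CA$-hypothesis further forbids decompositions with more than one non-cyclic factor---two non-commuting factors would produce an element whose centralizer contains two non-commuting subgroups---collapsing each family to its single listed representative. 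The main obstacle will be this centralizer-counting inside the central product: correctly tracking how the identification of centres between factors collapses the set of distinct centralizers, and confirming that the equality $|\cent(P)|=|P|/|Z(P)|$ is as restrictive as the conclusion claims, is the technical core of the verification.
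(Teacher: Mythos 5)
Your skeleton (reduce to $G=A\times P$, invoke Leong's classification from Lemma~\ref{L3}, then eliminate cases) is the same as the paper's, but two steps are genuinely problematic. First, your derivation of $G=A\times P$ in the non-$\CA$ case is circular: Theorem~\ref{T4}(2) takes $|G'|=2$ as a \emph{hypothesis}, which you have not established at that point, and Theorem~\ref{T4}(1) does not give a direct product decomposition. The paper gets everything it needs directly from the hypotheses via Lemma~\ref{L12}: the $m$-centralizer condition is equivalent to $[x]_{\sim_1}=[x]_{\sim_2}$ for all $x$, so Lemma~\ref{L5} makes $G/Z(G)$ elementary abelian of exponent $2$; then $G'$ is an elementary abelian $2$-subgroup of the cyclic group $Z(G)$, which has a unique involution, so $|G'|=2$; regularity of $\Gamma(G)$ and Lemma~\ref{L2} then give $G=A\times P$ with $P$ a $2$-group, $|P'|=2$ and $Z(P)$ cyclic. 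Your proposal never extracts $|P'|=2$ (your dichotomy ``either $P$ is $\CA$ or $|P'|=2$'' is asserted, not proved), and this is precisely the fact that makes the whole classification tractable.

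Second, and consequently, the case elimination you propose --- computing $|\cent(P)|$ inside each central product on Leong's list and comparing with $|P|/|Z(P)|$ --- is the step you yourself flag as the unexecuted ``technical core,'' and it is the decisive part of the proof. It is not wrong in principle (the equality $|\cent(P)|=|P|/|Z(P)|$ does eliminate the same groups), but it is much harder than necessary. With $|P'|=2$ in hand, every factor satisfies $[a,b]^2=1$, and the defining relations do the work in two lines: for $Q(n,r)$ with $2r\le n$ one gets $a^{2^{n-r+1}}=1$, forcing $r\le 1$; for $r\le n<2r$ one gets $a^{2^r}=1$, forcing $n=r=1$; for $R(n)$ one gets $n=1$; and Leong's ordering conditions kill the remaining mixed decomposition $Q(n_1,1)Q(n_2,0)\cdots$. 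Your reduction in part~(2) --- a second non-abelian central factor lies inside the centralizer of a non-central element of the first, contradicting the $\CA$ property --- is sound and plays the role of the paper's appeal to $|P|=4|Z(P)|$ from Lemma~\ref{L11}, but without the $|P'|=2$ computation the list of candidates it is applied to has not actually been pinned down.
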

\begin{theorem} \label{T3}
Let $G$ be an $m$-centralizer $\CA$-group. Then there exist integer $r>1$ such that
$m=2^r.$ Conversely for an arbitrary integer $r>1$, there exist a $\CA$-group $G$ which is $2^r$-centralizer.
\end{theorem}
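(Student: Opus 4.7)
The plan is to derive the forward direction directly from Theorem~\ref{T1} and then build an explicit family of examples for the converse.

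For the forward direction, I would first note that $m = |G|/|Z(G)|$ cannot be $2$ or $3$: otherwise $G/Z(G)$ would be cyclic of prime order (or trivial), forcing $G$ to be abelian and contradicting the $\CA$ hypothesis. So $m \neq 2,3$ and Theorem~\ref{T1} applies, yielding a decomposition $G = A \times P$ with $P$ a non-abelian $2$-group satisfying $|\cent(P)| = |P|/|Z(P)|$. Then
\[m = \frac{|G|}{|Z(G)|} = \frac{|A \times P|}{|A \times Z(P)|} = \frac{|P|}{|Z(P)|},\]
which is a power of $2$; since $|P/Z(P)|$ cannot be $1$ or prime, $m \geq 4$, so $m = 2^r$ for some $r > 1$.

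For the converse, given $r > 1$, I would consider
\[P_r = \bigl\langle a_1, \ldots, a_r \,\big|\, a_i^2 = 1,\ [a_i, a_j]^2 = 1,\ [[a_i, a_j], a_k] = 1 \text{ for all } i,j,k\bigr\rangle,\]
the class-$2$ $2$-group on $r$ involutive generators with elementary abelian derived subgroup. Standard facts about free nilpotent quotients give $|P_r| = 2^{r + \binom{r}{2}}$, $Z(P_r) = [P_r, P_r]$ elementary abelian of rank $\binom{r}{2}$, and $V := P_r/Z(P_r)$ elementary abelian of rank $r$, so $|P_r|/|Z(P_r)| = 2^r$. The commutator descends to an alternating bilinear map $V \times V \to [P_r, P_r]$, and since the $[a_i, a_j]$ with $i < j$ are linearly independent in $[P_r, P_r]$, a short computation shows that for every nonzero $\overline{x} \in V$ the kernel of $[\overline{x}, \cdot\,]$ is exactly $\langle \overline{x} \rangle$. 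Therefore $C_{P_r}(x) = Z(P_r) \cup xZ(P_r)$ is abelian for every $x \in P_r \setminus Z(P_r)$, so $P_r$ is a $\CA$-group; and the distinct centralizers are $P_r$ itself together with one per nonzero vector of $V$, giving $|\cent(P_r)| = 1 + (2^r - 1) = 2^r$. Hence $G = P_r$ is a $2^r$-centralizer $\CA$-group.

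The main substantive point is the linear-algebra identification of $\ker[\overline{x}, \cdot\,]$ with $\langle \overline{x}\rangle$ in $V$, which reduces to the system $x_i v_j = x_j v_i$ for all $i,j$; this single fact delivers both the $\CA$ property (via Theorem~\ref{T1}(3), applied to $P_r$) and the exact count of centralizers. The remaining verifications, namely the order of $P_r$, the identification $Z(P_r) = [P_r, P_r]$, and the matching of $|\cent(P_r)|$ with $|P_r|/|Z(P_r)|$, are routine bookkeeping on the presentation.
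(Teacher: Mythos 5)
Your proof is correct, but it takes a genuinely different route from the paper's in both directions, so a comparison is worth recording. For the forward implication the paper invokes Proposition~\ref{P5}, which builds a chain $Z(G)\leq H_1\leq\dots\leq H_r=G$ with $|H_i|=2^i|Z(G)|$ and reads off $m=2^r$; you instead rule out $m=2,3$ (correctly, via $G/Z(G)$ non-cyclic) and pass through Theorem~\ref{T1} to a $2$-group direct factor. Both work, and in fact the shortest route is Lemma~\ref{L5} alone: $G/Z(G)$ is an elementary abelian $2$-group and, being non-cyclic, has order $2^r$ with $r>1$ --- the $\CA$ hypothesis is not even needed for this half. For the converse the paper simply cites Lemma~\ref{L4} with $p=2$ (a group of order $2^{2r}$ with center of order $2^r$ and $C_P(x)=Z(P)\langle x\rangle$) and then applies Lemma~\ref{L10}; you replace that citation with an explicit relatively free class-two group $P_r$ on $r$ involutions and verify the centralizer structure by hand through the alternating bilinear map on $V=P_r/Z(P_r)$. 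Your key step --- that $\ker[\overline{x},\cdot\,]=\langle\overline{x}\rangle$ because $x_iv_j=x_jv_i$ for all $i,j$ forces $\overline{v}\in\{0,\overline{x}\}$ over $\mathbb{F}_2$ --- is right, and it delivers $C_{P_r}(x)=Z(P_r)\cup xZ(P_r)$, hence the $\CA$ property and the count $|\cent(P_r)|=1+(2^r-1)=2^r$ exactly as claimed. What your approach buys is a self-contained, concrete family of examples (of order $2^{r+\binom{r}{2}}$) independent of the reference \cite{MO}; what it costs is the burden of justifying the ``standard facts'' that the presentation really has order $2^{r+\binom{r}{2}}$ with $Z(P_r)=[P_r,P_r]$ elementary abelian of rank $\binom{r}{2}$ --- to make that airtight you should exhibit a model, e.g.\ the set $V\times\Lambda^2V$ with multiplication $(u,w)(u',w')=(u+u',w+w'+\beta(u,u'))$ for a bilinear $\beta$ with $\beta(u,u')+\beta(u',u)=u\wedge u'$ and $\beta(e_i,e_i)=0$, which realizes all the relations with the commutators independent. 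This is routine but should not be left entirely implicit.
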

\begin{theorem} \label{T4}
Let $G$ be a non-abelian $m$-centralizer group which is not a $\CA$-group. Then
\begin{enumerate}
\item[1)] there exists normal subgroup $H\unlhd G$ so that $|H|=16|Z(G)|, Z(G)=Z(H)$.
\item[2)] Also if $|G'|=2$, then $m=2^{2s}$, for some integer $s>1$.
\end{enumerate}
\end{theorem}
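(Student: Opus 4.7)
The plan is to establish Part~(1) directly and then bootstrap Part~(2) from it.

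For Part~(1), the failure of the $\CA$-property supplies $a\in G\setminus Z(G)$ with $C_G(a)$ non-abelian, hence $b,c \in C_G(a)\setminus Z(G)$ with $[b,c]\neq 1$. Because $a\notin Z(G)$, there is also some $d\in G$ with $[a,d]\neq 1$. I propose taking $H = \langle a,b,c,d\rangle Z(G)$ and verifying: (i) $\bar a,\bar b,\bar c,\bar d$ are independent in $G/Z(G)$, so $|H|=16|Z(G)|$; (ii) $Z(H)=Z(G)$; (iii) $H \unlhd G$. The commutator data $[a,b]=[a,c]=1$, $[a,d]\neq 1$, $[b,c]\neq 1$ assemble into the $4\times 4$ alternating matrix
\begin{displaymath}
\begin{pmatrix}
0 & 0 & 0 & \ast\\
0 & 0 & \ast & \ast\\
0 & \ast & 0 & \ast\\
\ast & \ast & \ast & 0
\end{pmatrix}
\end{displaymath}
whose Pfaffian $[a,d]\cdot[b,c]$ is non-trivial. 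Once interpreted as a bilinear form on $\langle \bar a,\bar b,\bar c,\bar d\rangle$, this non-degeneracy yields (i) and (ii) simultaneously (independence of the generators, triviality of the radical), and (iii) then follows from $G'\leq Z(G)\subseteq H$, since $gHg^{-1}\subseteq H\cdot G' = H$.

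The principal obstacle in Part~(1) is that the Pfaffian calculation presupposes that the commutator descends to a bilinear alternating form on $G/Z(G)$, i.e.\ that $G$ has nilpotency class $2$ with $G/Z(G)$ of exponent $2$. To reach this setting the preliminary step is a reduction to a $2$-group via a direct-product decomposition $G = A\times P$ with $A$ abelian and $P$ a $2$-group, mirroring the strategy of Theorems~\ref{T1} and \ref{T2} (so one may assume $G = P$), and then arguing that the tight bijection between non-trivial cosets of $Z(P)$ and non-trivial centralizers forces class $2$ and the elementary-abelian central quotient required for the Pfaffian argument to run.

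For Part~(2), assume $|G'|=2$ and write $G'=\{1,c\}$. Normality of $G'$ forces $c\in Z(G)$, so $G'\leq Z(G)$ and $G$ has class $2$. Bilinearity of the commutator in the quotient then gives $[g^2,h]=[g,h]^2=1$ because $[g,h]\in G'$ has order at most $2$; hence $g^2\in Z(G)$ for every $g$ and $G/Z(G)$ is an elementary abelian $2$-group. The induced map $G/Z(G)\times G/Z(G)\to G'\cong\mathbb{F}_2$ is an $\mathbb{F}_2$-bilinear alternating pairing whose radical equals $Z(G)/Z(G)=1$, hence is non-degenerate; standard symplectic linear algebra over $\mathbb{F}_2$ then forces $\dim_{\mathbb{F}_2}(G/Z(G))=2s$ to be even, giving $m=2^{2s}$. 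To obtain $s>1$ I invoke Part~(1): the normal subgroup $H\unlhd G$ with $|H|=16|Z(G)|$ forces $m\geq 16$, so $s\geq 2$; alternatively, $s=1$ would give $|G/Z(G)|=4$, making every non-central centralizer equal $\langle x\rangle Z(G)$ (which is abelian) and contradicting the failure of the $\CA$-property.
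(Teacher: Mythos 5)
Your proposal is essentially correct, and in Part~(2) it takes a genuinely different route from the paper. For Part~(1) the paper does the same thing you do in spirit --- it produces four elements $x,y,w,a$ (chosen from a commuting pair with distinct centralizers plus a common non-neighbour, rather than from a non-abelian centralizer plus a non-commuting partner), observes via Lemma~\ref{L5} that $G/Z(G)$ is elementary abelian of class~$2$, checks that the sixteen cosets are distinct and that each non-central one fails to commute with a generator, and takes $H$ to be their union; your symplectic packaging of the same verification is fine, and your normality argument $gHg^{-1}\subseteq HG'=H$ is exactly right. For Part~(2) the paper instead uses Lemma~\ref{L9} and the Bruckheimer--Bryan--Muir results on groups that are a union of three subgroups to build a chain $Z(G)\subseteq\dots\subseteq K_2\subseteq K\subseteq G$ with each index equal to $4$ and each $Z(K_i)=Z(G)$, deducing $|G|=4^s|Z(G)|$ by iteration; your one-step argument --- that $|G'|=2$ forces a non-degenerate alternating $\mathbb{F}_2$-form on $G/Z(G)$, whence even rank --- is cleaner, more standard, and avoids the delicate case analysis in the paper's proof that $Z(K)=Z(G)$. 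Both of your closing arguments for $s>1$ are valid.

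Two points to tighten. First, your literal Pfaffian $[a,d]\cdot[b,c]$ is a product in $G'$, which is an elementary abelian $2$-group of possibly large rank in Part~(1); if $[a,d]=[b,c]$ the product is trivial. You must first scalarize by composing the commutator form with a linear functional $\lambda:G'\to\mathbb{F}_2$ that is non-zero on both $[a,d]$ and $[b,c]$ (such $\lambda$ exists, as the union of two proper subspaces of the dual is proper), or simply verify independence and triviality of the radical directly by pairing a putative relation $a^{\epsilon_1}b^{\epsilon_2}c^{\epsilon_3}d^{\epsilon_4}\in Z(G)$ against $a$, then $b$, then $c$, then $d$, which kills $\epsilon_4,\epsilon_3,\epsilon_2,\epsilon_1$ in turn. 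Second, the ``preliminary step'' you describe to reach class~$2$ with exponent-$2$ central quotient is precisely Lemma~\ref{L5}, which the paper cites; you should invoke it directly rather than routing through a direct-product decomposition $G=A\times P$, since the decomposition lemma used in Theorems~\ref{T1} and~\ref{T2} requires regularity of $\Gamma(G)$, which is not available a priori for a non-$\CA$-group, and is in any case unnecessary once $G/Z(G)$ is known to be elementary abelian.
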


Section 2 contains some definitions and preliminaries. In Section 3 we prove the main theorems.
\section{Definitions and preliminaries}
We consider two equivalence relations on the group $G$ that will be used in the proofs. We say $x{\sim_1} y$ if and only
if $C_G(x)=C_G(y).$ Also $x{\sim_2} y$ if and only if $xZ(G)=yZ(G).$ The number of equivalence classes of
${\sim_1}$ and ${\sim_2}$ on the group $G$ are equal with $|\cent(G)|$ and $|G|/|Z(G)|$ respectively.
We denote the equivalence class including $x$ by $[x]_{\sim}.$

We show that $|\cent(G)|=|G|/|Z(G)|$ if and only if $[x]_{\sim_1}=[x]_{\sim_2}$ for all $x\in G.$
Now we present some useful Lemmas.

\begin{lemma} \label{L1}
\cite[Lemma 3.6]{AAM} Let $G$ be a group. Then the following are equivalent:
\begin{enumerate}
\item[1)] $G$ is a $\CA$-group.
\item[2)] If $[x,y]=1$ then $C_G(x)=C_G(y)$ where $x,y\in G\setminus Z(G).$
\item[3)] If $[x,y]=[x,z]=1$ then $[y,z]=1$ where $x\in G\setminus Z(G).$
\item[4)] If $A,B\leq G, Z\lneqq C_G(A) \leqslant C_G(B) \lneqq G,$ then $C_G(A)=C_G(B).$
\end{enumerate}
\end{lemma}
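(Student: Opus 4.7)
The plan is to prove the chain of equivalences by establishing $(1)\Leftrightarrow(2)$, then $(2)\Leftrightarrow(3)$, and finally $(2)\Leftrightarrow(4)$, each using only elementary centralizer manipulations.

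For $(1)\Rightarrow(2)$, suppose $G$ is a $\CA$-group and $[x,y]=1$ with $x,y\in G\setminus Z(G)$. Then $y\in C_G(x)$, which is abelian by hypothesis. For any $z\in C_G(x)$, the element $z$ commutes with $y$ since $C_G(x)$ is abelian, giving $z\in C_G(y)$. Hence $C_G(x)\subseteq C_G(y)$; by symmetry the two centralizers agree. Conversely, for $(2)\Rightarrow(1)$, fix $x\in G\setminus Z(G)$ and take $y,z\in C_G(x)$. If either lies in $Z(G)$, commutation with the other is immediate; otherwise $[x,y]=1$ forces $C_G(x)=C_G(y)$ by (2), so $z\in C_G(y)$, and $C_G(x)$ is abelian.

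The equivalence $(2)\Leftrightarrow(3)$ is a direct translation. Assuming (2), if $[x,y]=[x,z]=1$ with $x\notin Z(G)$, we may assume $y,z\notin Z(G)$ (else $[y,z]=1$ trivially), so $C_G(x)=C_G(y)=C_G(z)$ by (2); then $z\in C_G(y)$ gives (3). Conversely, if (3) holds and $[x,y]=1$ with $x,y\notin Z(G)$, then for every $z\in C_G(x)$ we have $[x,z]=[x,y]=1$, hence $[y,z]=1$ by (3), showing $C_G(x)\subseteq C_G(y)$; the reverse inclusion is symmetric.

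The main content lies in $(2)\Leftrightarrow(4)$. For $(4)\Rightarrow(2)$, take $A=\{x\}$ and $B=\{x,y\}$ when $[x,y]=1$ with $x,y\notin Z(G)$: then $C_G(B)=C_G(x)\cap C_G(y)\subseteq C_G(x)=C_G(A)$, and the hypothesis $x\notin Z(G)$ forces $Z(G)\lneq C_G(B)\leq C_G(A)\lneq G$, so (4) yields $C_G(B)=C_G(A)=C_G(x)$; the same argument with $A=\{y\}$ gives $C_G(x)=C_G(y)$. For the harder direction $(2)\Rightarrow(4)$, the plan is: since $C_G(B)\lneq G$, pick $b\in B\setminus Z(G)$, and since $C_G(A)\supsetneq Z(G)$, pick $x\in C_G(A)\setminus Z(G)$. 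The inclusion $C_G(A)\leq C_G(B)$ puts $x\in C_G(b)$, so by (2), $C_G(x)=C_G(b)$. To conclude $C_G(B)\subseteq C_G(A)$, take $y\in C_G(B)$, which we may assume is outside $Z(G)$; then $[y,b]=1$ gives $C_G(y)=C_G(b)=C_G(x)$ by (2), so $y$ commutes with $x$. For any $a\in A\setminus Z(G)$, applying (2) to $[x,a]=1$ gives $C_G(x)=C_G(a)$, and then $y\in C_G(x)=C_G(a)$ forces $[y,a]=1$, so $y\in C_G(A)$.

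The only real subtlety will be keeping track of the hypothesis $C_G(A)\lneq G$ vs.\ $C_G(B)\lneq G$ and ensuring that the elements $x$, $b$, $y$ one extracts all lie outside $Z(G)$, so that condition (2) genuinely applies to them; handling the degenerate cases where $y\in Z(G)$ or $a\in Z(G)$ is trivial but must be mentioned explicitly to make the argument airtight.
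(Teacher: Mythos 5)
Your argument is correct; note that the paper itself gives no proof of this lemma at all --- it is quoted verbatim from \cite[Lemma 3.6]{AAM} --- so there is nothing internal to compare against, and what you have supplied is a complete self-contained proof of the cited result. All four implications check out: $(1)\Leftrightarrow(2)$ and $(2)\Leftrightarrow(3)$ are routine, and in $(2)\Rightarrow(4)$ the key chain $C_G(y)=C_G(b)=C_G(x)=C_G(a)$, obtained by applying (2) three times through the noncentral elements $b\in B$, $x\in C_G(A)\setminus Z(G)$, $a\in A\setminus Z(G)$, together with the observations that $C_G(B)\lneqq G$ forces $B\not\subseteq Z(G)$ and $Z(G)\lneqq C_G(A)$ supplies $x$, does give $C_G(B)\subseteq C_G(A)$ and hence equality. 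Two cosmetic points: in statement (4) the objects $A,B$ are subgroups, so you should write $A=\langle x\rangle$, $B=\langle x,y\rangle$ rather than the underlying sets (harmless, since $C_G(S)=C_G(\langle S\rangle)$); and in your $(4)\Rightarrow(2)$ step the inclusion you produce is $C_G(B)\leq C_G(A)$, which is the reverse of the orientation in the statement of (4), so you are tacitly applying (4) with the roles of $A$ and $B$ interchanged --- worth saying explicitly, but not a gap.
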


\begin{lemma}\label{L2} \cite[Proposition 2.6]{AAM} Let $G$ be a finite non-abelian group and $\Gamma(G)$
be a regular graph. Then $G$ is nilpotent of class at most $3$ and $G=A\times P,$ where $A$ is an abelian
group and $P$ is a $p$-group ($p$ is a prime) and furthermore $\Gamma(P)$ is a regular graph.
\end{lemma}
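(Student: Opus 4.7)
The plan is to translate the regularity hypothesis into a statement about conjugacy class sizes and then invoke the classical structural results for finite groups whose non-central conjugacy classes all have the same size. First I would compute the degree of a non-central vertex in $\Gamma(G)$: for $x\in G\setminus Z(G)$ the neighbours of $x$ are the elements of $(G\setminus Z(G))\setminus C_G(x)$, and since $Z(G)\subseteq C_G(x)$ this gives
\[ \deg_{\Gamma(G)}(x)=(|G|-|Z(G)|)-(|C_G(x)|-|Z(G)|)=|G|-|C_G(x)|. \]
So $\Gamma(G)$ being regular is equivalent to $|C_G(x)|=c$ being constant as $x$ runs over $G\setminus Z(G)$, which in turn is equivalent to every non-central conjugacy class having the same size $d:=|G|/c$. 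In other words, $G$ has exactly two conjugacy class sizes, namely $1$ and $d$.

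Second, I would apply It\^o's theorem on groups with two class sizes: such a $G$ is nilpotent, the non-trivial class size $d$ is a prime power $p^{a}$, and $G$ decomposes as $G=P\times A$ where $P$ is the (non-abelian) Sylow $p$-subgroup and $A$ is the product of the remaining Sylow subgroups (hence abelian). For the nilpotency-class bound I would then invoke Ishikawa's theorem that any finite $p$-group with exactly two conjugacy class sizes has nilpotency class at most $3$; applied to $P$ this yields $\mathrm{cl}(G)=\mathrm{cl}(P)\leq 3$.

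Regularity of $\Gamma(P)$ then follows by a direct computation inside the factor. From $G=P\times A$ with $A$ abelian we have $Z(G)=Z(P)\times A$ and, for $x\in P$, $C_G(x)=C_P(x)\times A$. Hence $|C_P(x)|=c/|A|$ is constant as $x$ runs over $P\setminus Z(P)$, and the same degree formula applied inside $P$ shows that $\Gamma(P)$ is $(|P|-c/|A|)$-regular.

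The main obstacle I anticipate is the nilpotency-class bound of $3$. It\^o's decomposition is classical and essentially forces the direct-product structure via a careful analysis of the class equation together with centralizers of elements of coprime order, so the splitting $G=P\times A$ and the inheritance of regularity by $\Gamma(P)$ are then routine. By contrast, the bound $\mathrm{cl}(P)\leq 3$ is a substantially deeper commutator-theoretic result about $p$-groups, and if one wished to avoid citing Ishikawa one would have to reproduce the corresponding commutator calculus inside $P$, which would form the technical heart of a self-contained argument.
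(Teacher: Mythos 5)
Your proof is correct and follows essentially the same approach as the source from which the paper quotes this lemma without proof (\cite[Proposition 2.6]{AAM}): the degree computation $\deg(x)=|G|-|C_G(x)|$ turns regularity into the statement that $G$ has exactly two conjugacy class sizes, after which It\^o's theorem gives $G=A\times P$ and Ishikawa's theorem gives the class bound $3$, with regularity of $\Gamma(P)$ following from $C_G(x)=C_P(x)\times A$. Since the paper itself supplies no argument for this cited result, there is nothing further to compare.
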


\begin{lemma}\label{L4}
\cite[Lemma 4]{MO} Let $p$ be a prime number and let $r>1$ be an integer. Then there exists a
non-abelian $p$-group $P$ of order $p^{2r}$ such that:
\begin{enumerate}
\item[1)] $|Z(P)|=p^r;$
\item[2)] $P/Z(P)$ is an elementary abelian $p$-group;
\item[3)] for every noncentral element $x$ of $P$, $C_P(x)=Z(P)\langle x\rangle.$
\end{enumerate}
\end{lemma}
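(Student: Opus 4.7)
The plan is to construct an explicit $p$-group $P$ of order $p^{2r}$ realising the three listed properties, via a Frobenius-twisted central extension of an elementary abelian group by itself. Let $V$ denote the additive group of $\mathbb{F}_{p^r}$, which is elementary abelian of order $p^r$, and let $\sigma(x)=x^p$ be the Frobenius automorphism, an $\mathbb{F}_p$-linear endomorphism of order $r$. I would take $P=V\times V$ as a set and define
\[
(u_1,v_1)\cdot(u_2,v_2):=(u_1+u_2+v_1\sigma(v_2),\,v_1+v_2).
\]
The map $(a,b)\mapsto a\sigma(b)$ is $\mathbb{F}_p$-bilinear, so associativity is a short calculation; the identity is $(0,0)$ and each $(u,v)$ has an explicit inverse, so $P$ is a group of order $p^{2r}$. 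Since $r>1$ we may pick $v_2\in V\setminus\mathbb{F}_p$, and with $v_1=1$ we obtain $v_1\sigma(v_2)-v_2\sigma(v_1)=v_2^p-v_2\neq 0$, which confirms that $P$ is nonabelian.

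Next I would identify the centre. The products $(u_1,v_1)(u_2,v_2)$ and $(u_2,v_2)(u_1,v_1)$ agree in the second coordinate and differ in the first by $v_1\sigma(v_2)-v_2\sigma(v_1)$, so $(u_1,v_1)\in Z(P)$ iff $v_1\sigma(v_2)=v_2\sigma(v_1)$ for every $v_2\in V$. For $v_1\neq 0$ this rearranges to $(v_2/v_1)^{p-1}=1$ for every nonzero $v_2$, which would force $V\subseteq\mathbb{F}_p\cdot v_1$, impossible since $|V|=p^r>p$. Hence $Z(P)=V\times\{0\}$ has order $p^r$, proving (1). The second-coordinate projection $(u,v)\mapsto v$ is a surjective homomorphism with kernel exactly $Z(P)$, so $P/Z(P)\cong V$ is elementary abelian of order $p^r$, proving (2).

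For (3), fix a noncentral $x=(u,v)$; then $v\neq 0$. The same commutator calculation gives $(u',v')\in C_P(x)$ iff $v\sigma(v')=v'\sigma(v)$, i.e.\ $(v'/v)^{p-1}=1$ (or $v'=0$); since $p-1$ divides $p^r-1$, the equation $t^{p-1}=1$ in the cyclic group $\mathbb{F}_{p^r}^{\times}$ has precisely the $p-1$ roots $\mathbb{F}_p^{\times}$, so $v'\in\mathbb{F}_p\cdot v$. Thus $|C_P(x)|=p\cdot p^r=p^{r+1}$. On the other hand $\overline{x}$ has order $p$ in the elementary abelian quotient $P/Z(P)$, so $|Z(P)\langle x\rangle|=p^{r+1}$ as well, and the evident inclusion $Z(P)\langle x\rangle\subseteq C_P(x)$ is then an equality.

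The main obstacle is the centralizer step: checking that the single field equation $v\sigma(v')=v'\sigma(v)$ cuts out no more than the line $\mathbb{F}_p v$ inside $V$. Everything else (associativity, the identification of the centre, and the quotient calculation) is routine bookkeeping, and a pleasant feature of the Frobenius twist is that the same argument works uniformly for $p=2$ and odd $p$, so no case split on the prime is required.
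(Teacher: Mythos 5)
Your construction is correct, and every step checks out: the cocycle $(a,b)\mapsto a\sigma(b)$ is biadditive so the multiplication is associative; the commutator of $(u_1,v_1)$ and $(u_2,v_2)$ is governed by $v_1\sigma(v_2)-v_2\sigma(v_1)$, which identifies $Z(P)=V\times\{0\}$ of order $p^r$ (here $r>1$ is genuinely needed, both for non-abelianness and for the centre not to be larger); the second-coordinate projection gives $P/Z(P)\cong (V,+)$; and the count of solutions of $t^{p-1}=1$ in $\mathbb{F}_{p^r}^{\times}$ pins $C_P(u,v)$ down to $\mathbb{F}_p v$ in the second coordinate, giving $|C_P(x)|=p^{r+1}=|Z(P)\langle x\rangle|$. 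For context: the paper offers no proof of this lemma at all, citing it as Lemma 4 of Moghaddamfar's paper on noncommuting graphs; the group constructed there is essentially the same Frobenius-twisted extension of $\mathbb{F}_{p^r}$ by itself (often written $A(r,\sigma)$), so your argument is a faithful, self-contained reconstruction of the cited source rather than a genuinely different route. The one point worth making explicit if you write this up is that $Z(P)\langle x\rangle$ has order exactly $p^{r+1}$ because $\bar{x}$ has order $p$ in $P/Z(P)$ and $x\notin Z(P)$; you do say this, so nothing is missing.
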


\begin{lemma}\label{L5}\cite[Theorem 2.1]{JMR}
Let $G$ be a non-abelian group and $|\cent(G)|=|G|/|Z(G)|$. Then $G/Z(G)$ is an elementary abelian 2-group.
\end{lemma}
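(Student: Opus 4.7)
The plan is to exploit the two equivalence relations $\sim_1$ and $\sim_2$ already introduced in the paper and to pin down a canonical element whose centralizer agrees with that of $x$. First I would observe that $\sim_2$ is always a refinement of $\sim_1$: if $xZ(G) = yZ(G)$, then $y = xz$ for some $z \in Z(G)$, and clearly $C_G(xz) = C_G(x)$. Hence each $\sim_1$-class is a union of $\sim_2$-classes, so in general $|G|/|Z(G)| \geq |\cent(G)|$. Under our hypothesis $|\cent(G)| = |G|/|Z(G)|$, equality of the two counts forces every $\sim_1$-class to coincide with a single $\sim_2$-class, i.e.\ $[x]_{\sim_1} = xZ(G)$ for every $x \in G$. (This is exactly the equivalence stated between Lemmas in Section~2 of the paper.)

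The key step is then to produce a concrete element in $[x]_{\sim_1}$ other than $x$ itself. The natural candidate is $x^{-1}$, since trivially $C_G(x^{-1}) = C_G(x)$. Therefore $x^{-1} \in [x]_{\sim_1} = xZ(G)$, which means $x^{-1} = xz$ for some $z \in Z(G)$, and rearranging yields
\[
x^2 = z^{-1} \in Z(G).
\]
Thus every element of $G$ squares into the center.

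Finally, the quotient $G/Z(G)$ has exponent dividing $2$. A standard one-line argument shows any such group is abelian: for $a, b \in G/Z(G)$ we have $1 = (ab)^2 = abab$, so $ab = b^{-1}a^{-1} = ba$. Combined with exponent $2$, this makes $G/Z(G)$ an elementary abelian $2$-group.

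The one step that needs a small amount of care is the initial observation that $\sim_2$ refines $\sim_1$ and therefore $|\cent(G)| \leq |G|/|Z(G)|$ holds unconditionally; everything afterwards is essentially bookkeeping. There is no genuine obstacle in the argument, which is why the lemma can be stated as a short two-sentence result in \cite{JMR}.
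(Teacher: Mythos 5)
Your proposal is correct. Note that the paper itself offers no proof of this lemma --- it is quoted from \cite[Theorem 2.1]{JMR} --- so there is no in-paper argument to compare against; what you have written is a valid, self-contained proof. The bookkeeping at the start (that $\sim_2$ refines $\sim_1$, hence $|\cent(G)|\leq |G|/|Z(G)|$, with equality forcing $[x]_{\sim_1}=[x]_{\sim_2}$ for every $x$) is exactly the content of Lemma~\ref{L6} and the remark following it, so that part is already available in the paper. The genuinely good step is your choice of $x^{-1}$ as the canonical second member of $[x]_{\sim_1}$: since $C_G(x^{-1})=C_G(x)$ unconditionally, the equality of the two partitions yields $x^{-1}\in xZ(G)$, i.e.\ $x^2\in Z(G)$, and the standard exponent-$2$ argument finishes. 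All steps check out, including the case $x\in Z(G)$ (trivial) and the fact that a group of exponent dividing $2$ is automatically abelian, so $G/Z(G)$ is indeed elementary abelian of order $>1$ because $G$ is non-abelian.
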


%%%%%%%%%%%%%%%%%%%%%%%%%%%%%%%%%%%%%%%%%%%%%%%%%%%
\section{Proof of the main theorems}
We start with some lemmas which will be used in the proof of the main theorems.

\begin{lemma} \label{L6}
Let $G$ be a non-abelian group. Then $[x]_{\sim_2}\subseteq [x]_{\sim_1},$ for all $x\in G.$
\end{lemma}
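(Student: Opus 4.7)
The plan is to unpack the definitions of the two equivalence relations and use the fact that central elements commute with everything in $G$. Specifically, given $y \in [x]_{\sim_2}$, I need to produce the identity of centralizers $C_G(x) = C_G(y)$ required for membership in $[x]_{\sim_1}$.

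First I would take an arbitrary $y$ with $y \sim_2 x$, i.e.\ $yZ(G) = xZ(G)$. This immediately yields an element $z \in Z(G)$ with $y = xz$. The next step is to compute $C_G(y)$ in terms of $C_G(x)$: for any $g \in G$, the equality $gy = yg$ becomes $gxz = xzg$, and since $z$ lies in $Z(G)$ it commutes with $g$, so one can cancel $z$ on both sides and obtain $gx = xg$. Reversing this chain shows $g \in C_G(x)$ iff $g \in C_G(y)$, hence $C_G(x) = C_G(y)$, which is exactly $y \sim_1 x$. This gives the desired inclusion $[x]_{\sim_2} \subseteq [x]_{\sim_1}$.

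There is no real obstacle here: the statement is a direct consequence of the fact that $Z(G) \subseteq C_G(g)$ for every $g \in G$, so multiplication by central elements does not change centralizers. The same reasoning also explains why the reverse inclusion generally fails, which is precisely the content explored by the $\CA$-hypothesis in Lemma~\ref{L1}, and will be relevant for the equivalence $|\cent(G)| = |G|/|Z(G)| \iff [x]_{\sim_1} = [x]_{\sim_2}$ mentioned in Section~2.
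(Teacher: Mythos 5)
Your proposal is correct and follows the same route as the paper: write $y = xz$ with $z \in Z(G)$ and conclude $C_G(y) = C_G(xz) = C_G(x)$, hence $y \in [x]_{\sim_1}$. You merely spell out the cancellation of the central element $z$ that the paper leaves implicit, so there is nothing to add.
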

\begin{proof} Let $y\in~[x]_{\sim_2}.$ Therefore there exists $z\in Z(G)$ so that $y=xz.$ Since $C_G(y)=C_G(xz)=C_G(x)$ we have $y\in~[x]_{\sim_1}.$
\end{proof}

By Lemma \ref{L6}, $|\cent(G)|\leq |G|/|Z(G)|$. Hence $|\cent(G)|=|G|/|Z(G)|$ if and only if $[x]_{\sim_1}=[x]_{\sim_2},$ for all $x\in G.$

\begin{lemma}\label{L7}
Let $G$ be a non-abelian group. Then the following are equivalent:
\begin{itemize}
\item[1)] If $[x,y]=1$, then $xZ(G)=yZ(G)$ where $x,y\in~G\setminus Z(G).$
\item[2)] $G$ is a $\CA$-group and $[x]_{\sim_1}=[x]_{{\sim_2}},$ for all $x\in G\setminus Z(G).$
\item[3)] $[x,y]=1$ and $[x,w]=1$ imply that $yZ(G)=wZ(G)$ where $x,y,w\in G\setminus Z(G).$
\end{itemize}
\end{lemma}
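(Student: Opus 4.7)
The plan is to prove the implications in the cycle $(1) \Rightarrow (2) \Rightarrow (3) \Rightarrow (1)$, leveraging Lemma \ref{L1} and Lemma \ref{L6}.

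For $(1) \Rightarrow (2)$, I would first establish that $G$ is a $\CA$-group by verifying condition (3) of Lemma \ref{L1}. Given $[x,y] = [x,z] = 1$ with $x \in G \setminus Z(G)$, the only nontrivial case is when $y, z \in G \setminus Z(G)$; then (1) yields $xZ(G) = yZ(G) = zZ(G)$, so $y$ and $z$ differ by a central element and hence commute. For the second part of (2), Lemma \ref{L6} gives $[x]_{\sim_2} \subseteq [x]_{\sim_1}$, and the reverse inclusion follows since any $y \in [x]_{\sim_1}$ satisfies $C_G(x) = C_G(y)$ (so $[x,y] = 1$), and one checks $y \notin Z(G)$ (otherwise $C_G(y) = G$ forces $x \in Z(G)$), whereupon (1) gives $xZ(G) = yZ(G)$.

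For $(2) \Rightarrow (3)$, I would use Lemma \ref{L1}(3) together with Lemma \ref{L1}(2): from $[x,y] = [x,w] = 1$ and the $\CA$-property we get $[y,w] = 1$ as well, and then pairwise commutativity among noncentral elements gives $C_G(x) = C_G(y) = C_G(w)$, i.e. $y, w \in [x]_{\sim_1}$. The assumed equality $[x]_{\sim_1} = [x]_{\sim_2}$ then yields $yZ(G) = xZ(G) = wZ(G)$.

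For $(3) \Rightarrow (1)$, the argument is a one-line specialization: given $[x,y] = 1$ with $x, y \in G \setminus Z(G)$, set $w = x$ and apply (3) to obtain $yZ(G) = xZ(G)$. None of these steps should present a serious obstacle; the only point requiring care is the verification in $(1) \Rightarrow (2)$ that elements of $[x]_{\sim_1}$ cannot accidentally lie in $Z(G)$, so that hypothesis (1) applies to them. The proof is essentially a bookkeeping exercise combining Lemma \ref{L1} with the basic observation of Lemma \ref{L6}.
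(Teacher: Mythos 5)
Your proposal is correct and follows essentially the same route as the paper: the same cycle $(1)\Rightarrow(2)\Rightarrow(3)\Rightarrow(1)$ using Lemma \ref{L1} and Lemma \ref{L6}, with only cosmetic differences (you invoke Lemma \ref{L1}(3) rather than \ref{L1}(2) to get the $\CA$-property, and in $(3)\Rightarrow(1)$ you take $w=x$ where the paper takes $w=xz$ with $z\in Z(G)$). Both variants are valid, and your explicit check that elements of $[x]_{\sim_1}$ are noncentral is a detail the paper glosses over.
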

\begin{proof}
$1)\Rightarrow 2)$ Let $[x, y]=1$ implies that $xZ(G)=yZ(G)$ where $x,y\in G\setminus Z(G).$ Let $x\in G\setminus Z(G)$
and $y\in [x]_{\sim_1}.$ Then $C_G(x)=C_G(y)$ and $[x, y]=1$. By our assumption we have $xZ(G)=yZ(G)$ which means that
$y\in [x]_{\sim_2}.$ Therefore $[x]_{\sim_1}\subseteq [x]_{\sim_2}$ and by Lemma ~\ref{L6} $[x]_{\sim_1}=[x]_{\sim_2}$.
It is enough to prove that $G$ is a $\CA$-group. To this end, let $x,y\in G\setminus Z(G)$ and $[x, y]=1$. Since $xZ(G)=yZ(G)$ we have $[x]_{\sim_2}=[y]_{\sim_2}$.
Since $[x]_{\sim_1}=[x]_{\sim_2}$ it follows $[x]_{\sim_1}=[y]_{\sim_1}$ and therefore $C_G(x)=C_G(y).$ Now by Lemma ~\ref{L1} $G$ is a $\CA$-group.

$2)\Rightarrow 3)$ Let $G$ be a $\CA$-group and $[x]_{\sim_1}=[x]_{\sim_2},$ for all $x\in G\setminus Z(G).$ Let $[x,y]=1$ and $[x,w]=1$
where $x,y,w \in G\setminus Z(G)$. Since $G$ is a $\CA$-group, we have, by Lemma \ref{L1}, that $[y,w]=1$ and $C_G(y)=C_G(w)$.
So $[y]_{\sim_1}=[w]_{\sim_1}.$ By assumption $[y]_{\sim_2}=[w]_{\sim_2}$, which implies that $yZ(G)=wZ(G).$

$3)\Rightarrow 1)$ Let $[x,y]=1$ and $[x,w]=1$ imply that $yZ(G)=wZ(G)$ where $x,y,w\in G\setminus Z(G).$
Assume $[x,y]=1$ and let $w=xz$ for $z\in Z(G)$. Then $w\notin Z(G)$ and $[x,w]=1$. Also $[x,y]=1$.
Hence by assumption $yZ(G)=wZ(G)$. Now $yZ(G)=xzZ(G)$ and we have $yZ(G)=xZ(G)$.
\end{proof}

\begin{lemma}\label{L8}
Let $G$ be a non-abelian group. Let $[x]_{\sim_1}$ and $[y]_{\sim_1}$ be two different classes of ${\sim_1}.$
If $[x_0, y_0]\neq 1$ where $x_0\in [x]_{\sim_1}$ and $y_0\in [y]_{\sim_1},$ then $[u,v]\neq 1$ for all $u\in [x]_{\sim_1}$ and
$v\in [y]_{\sim_1}$. Also $[x_1, x_2]=1$ for all $x_1, x_2\in [x]_{\sim_1}$.
\end{lemma}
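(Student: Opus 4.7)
The plan is to argue by contradiction for the first assertion and by a direct observation for the second, working entirely from the definition $x\sim_1 y \Leftrightarrow C_G(x)=C_G(y)$.

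For the first claim, suppose to the contrary that there exist $u\in [x]_{\sim_1}$ and $v\in [y]_{\sim_1}$ with $[u,v]=1$. The proof will chain centralizer equalities to move the commutation relation from the pair $(u,v)$ to the pair $(x_0,y_0)$. Explicitly, since $u\sim_1 x_0$ we have $C_G(u)=C_G(x_0)$, and from $[u,v]=1$ we get $v\in C_G(u)=C_G(x_0)$, so $[x_0,v]=1$. Next, since $v\sim_1 y_0$, we have $C_G(v)=C_G(y_0)$, and from $[x_0,v]=1$ we get $x_0\in C_G(v)=C_G(y_0)$, so $[x_0,y_0]=1$, contradicting the hypothesis.

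For the second claim, let $x_1,x_2\in [x]_{\sim_1}$, so $C_G(x_1)=C_G(x_2)$. Since trivially $x_2\in C_G(x_2)=C_G(x_1)$, we conclude $[x_1,x_2]=1$.

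I do not expect any step to be a real obstacle; the whole lemma is a direct manipulation of the defining property $C_G(\cdot)=C_G(\cdot)$ of $\sim_1$. The only point worth highlighting is that the hypothesis that $[x]_{\sim_1}$ and $[y]_{\sim_1}$ are \emph{different} classes is not actually needed for the contradiction argument itself; it is automatically consistent with $[x_0,y_0]\ne 1$, because if the two classes coincided then $C_G(x_0)=C_G(y_0)$ would force $y_0\in C_G(y_0)=C_G(x_0)$, giving $[x_0,y_0]=1$.
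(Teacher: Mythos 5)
Your proof is correct and follows essentially the same route as the paper: both arguments amount to transporting the relation ``$v\in C_G(u)$'' along the equalities $C_G(u)=C_G(x_0)$ and $C_G(v)=C_G(y_0)$, yours phrased as a contradiction and the paper's phrased directly. Your closing observation that the ``different classes'' hypothesis is redundant (being forced by $[x_0,y_0]\neq 1$) is also correct.
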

\begin{proof}
If $u,v\in [x]_{\sim_1}$ then $C_G(u)=C_G(v)$ so $[u, v]=1$. Let $u\in [x]_{\sim_1},~v\in [y]_{\sim_1},~[u, v]\neq 1$.
If $w\in [y]_{\sim_1}$ then $C_G(w)=C_G(y)=C_G(v)$. Since $u\notin C_G(v)$, $u\notin C_G(w)$ and $[u, w]\neq 1.$
Similarly, it is easy to see that for all $z\in [x]_{\sim_1}$ and for all $w\in [y]_{\sim_1}$, we have $[z, w]\neq 1$.
\end{proof}

\begin{lemma}\label{L9}
Let $G$ be a non-abelian group and $|G'|=2.$ Then
\begin{itemize}
\item[1)] $[x]_{\sim_1}=[x]_{\sim_2}, x\in G;$
\item[2)] there exist $x,y\in G$ such that $[x, y]\neq 1$ and $G=C_G(x)\cup C_G(y)\cup C_G(xy);$
\item[3)] $G=A\times P$ where $P$ is a 2-group and $P/Z(P)$ is an elementary abelian 2-group.
%\cite[page 101]{MI}
\end{itemize}
\end{lemma}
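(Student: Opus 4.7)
The engine of the proof will be the classical observation that any normal subgroup of order $2$ lies in the center (since $\Aut(\mathbb{Z}/2\mathbb{Z})=1$), so $|G'|=2$ forces $G'\subseteq Z(G)$. Thus $G$ is nilpotent of class at most $2$, and the commutator is bilinear on $G$: $[ab,c]=[a,c][b,c]$ and $[a,bc]=[a,b][a,c]$ for all $a,b,c$. Writing $G'=\{1,c\}$, the map $g\mapsto[x,g]$ is a homomorphism $G\to G'$ with kernel $C_G(x)$, so $[G:C_G(x)]\in\{1,2\}$, with index $2$ precisely when $x\notin Z(G)$. This single structural fact will drive all three parts.

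For part 1), Lemma \ref{L6} gives one inclusion, and for the converse (the case $x\in Z(G)$ being trivial, since both classes then equal $Z(G)$) I would take $x,y\notin Z(G)$ with $C_G(x)=C_G(y)$ and evaluate $[g,xy^{-1}]=[g,x][g,y^{-1}]$ for each $g\in G$. If $g\in C_G(x)=C_G(y)$ both factors are trivial; otherwise each equals $c$, and the product is $cc^{-1}=1$. Hence $xy^{-1}\in Z(G)$, i.e.\ $xZ(G)=yZ(G)$. For part 2), I would start from any non-commuting pair $x,y\in G$; the product $xy$ cannot lie in $Z(G)$, for then $xy$ would commute with $x$, forcing $yx=xy$. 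Consequently $C_G(x),C_G(y),C_G(xy)$ all have index $2$ in $G$. For any $g$ outside $C_G(x)\cup C_G(y)$, bilinearity yields $[g,xy]=[g,x][g,y]=c\cdot c=1$, so $g\in C_G(xy)$.

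For part 3), a finite nilpotent group is the direct product of its Sylow subgroups, and since the derived subgroup of a direct product is the product of the derived subgroups, $|G'|=2$ allows only the $2$-Sylow $P$ to be non-abelian; the product $A$ of the odd Sylow subgroups is abelian, yielding $G=A\times P$. To see $P/Z(P)$ is elementary abelian of exponent $2$, I would invoke the class-$2$ identity $[x^2,y]=[x,y]^2$ (valid because $[x,y]\in P'\subseteq Z(P)$); since $P'$ has order $2$, this commutator is trivial for every $y\in P$, so $x^2\in Z(P)$. The only conceptual step in the entire argument is recognizing bilinearity of the commutator in class-$2$ groups as the unifying mechanism; once this is granted, each of the three parts is a short and essentially mechanical computation, and I do not expect any genuine obstacle.
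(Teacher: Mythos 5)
Your proof is correct, and the unifying use of commutator bilinearity in class-$2$ groups is a cleaner packaging of what the paper does by hand. For parts 1) and 2) your route is essentially the paper's: the paper also first notes $G'\subseteq Z(G)$, then in 1) derives a contradiction from $xy^{-1}\notin Z(G)$ by an explicit element computation that is exactly your identity $[g,xy^{-1}]=[g,x][g,y^{-1}]$ written out, and in 2) shows $txy=xyt$ for $t$ outside $C_G(x)\cup C_G(y)$ by the same $c\cdot c=1$ cancellation. The genuine divergence is in part 3): the paper observes that $|x^G|=2$ for all noncentral $x$, so the non-commuting graph $\Gamma(G)$ is regular, and then invokes Lemma \ref{L2} (the cited result of Abdollahi--Akbari--Maimani on regular non-commuting graphs) to get $G=A\times P$, followed by Lemma \ref{L5} (which applies thanks to part 1) to get that $G/Z(P)$, hence $P/Z(P)$, is elementary abelian. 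You instead argue directly: $G'\subseteq Z(G)$ gives nilpotency, hence a Sylow decomposition in which only the $2$-Sylow can be non-abelian, and the identity $[x^2,y]=[x,y]^2=1$ gives exponent $2$ for $P/Z(P)$. Your version is self-contained and avoids both external citations (and incidentally sidesteps the paper's misstatement that Lemma \ref{L2} yields $P$ ``elementary abelian''); the paper's version buys nothing extra here beyond consistency with the graph-theoretic machinery it uses elsewhere. No gaps.
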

\begin{proof}
$1)$ Let $G'=\{1,t\}, x\in G$ and $x\neq 1,t.$ Then $[t,x]\in G'$ and either
$[t,x]=1$ or $[t,x]=t$. If $[t,x]=t$, then $t^{-1}x^{-1}tx=t$ and we obtain $x^{-1}tx=t^2=1$.
Hence $tx=x$ and $t=1$ which is a contradiction. Hence $[t,x]=1$ and $t\in Z(G)$. Now we show
that $[x]_{\sim_1}~\subseteq [x]_{\sim_2}$.
For this, let $y\in [x]_{\sim_1}.$ Then $C_G(x)=C_G(y).$ Suppose, towards a contradiction,
that $xZ(G)\neq yZ(G)$. Then $xy^{-1}\notin Z(G)$. Therefore there exists $g\in G\setminus Z(G)$
such that $[xy^{-1},g]=t$. If $[x,g]=1$ and $[y,g]=1,$ then $[xy^{-1},g]=1$
which is a contradiction. So $[x,g]\neq 1$ or $[y, g]\neq 1$. Let $[x,g]\neq 1$. Then $[x,g]=t$ and $gx=xgt$.
Since $[xy^{-1},g]=t, gxy^{-1}=xy^{-1}gt$, it follows that $xgty^{-1}=xy^{-1}gt$ and $gty^{-1}=y^{-1}gt$. Hence
$gy^{-1}=y^{-1}g$ and $g\in C_G(y^{-1})=C_G(y)$, which is another contradiction. Similarly $[y,g]\neq 1$ gives a
contradiction. Therefore $xZ(G)=yZ(G)$ for $y\in [x]_{\sim_1}$ which implies $[x]_{\sim_1}\subseteq [x]_{\sim_2}$.
Now by Lemma \ref{L6}, we have $[x]_{\sim_1}=[x]_{\sim_2}$.

$2)$ Since $G$ is non-abelian, there exist $x, y\in G$ such that $[x,y]\neq 1$. Let $t\in G\setminus Z(G)$ and
$t\notin (C_G(x)\cup C_G(y))$. Then there are $a\neq 1\neq b$, such that $[t,y]=a$ and $[t,x]=b$. This
implies $a, b\in G'$, $a=b=z$ and $|z|=2$. By part (1), $[x]_{\sim_1}=[x]_{\sim_2}$ and
by Lemma \ref{L5}, $G/Z(G)$ is an elementary abelian 2-group. Hence $G'$ is a subgroup of $Z(G)$ and $z\in Z(G)$.
An easy calculation shows that $txy=xyt$. Hence $t\in C_G(xy).$ Therefore $G=C_G(x)\cup C_G(y)\cup C_G(xy)$.

$3)$ Since $|G'|=2$, we have $|x^G|=2$ for all $x\in G\setminus Z(G)$. As $|C_G(x)|=|x^G||G|$,
we find that $\Gamma(G)$ is a regular graph. By Lemma \ref{L2},
$G=A\times P$ where $P$ is an elementary abelian $p$-group. Now, Lemma \ref{L5}, implies $G/Z(G)$ is an
elementary abelian 2-group. Hence $P$ is a 2-group and $P/Z(P)$ is an elementary abelian 2-group. Note that
the fact that $G$ is the direct product of a $2$-group and of an abelian group of odd order can
be found in \ref{MI}.
\end{proof}

\begin{lemma} \label{L10}
Let $G$ be a non-abelian group. Then $C_G(x)=Z(G)\cup xZ(G),$ for all $x\in G\setminus Z(G)$ if and only if $G$ is a $\CA$-group
and $[x]_{\sim_1}=[x]_{\sim_2},$ for all $x\in G.$
\end{lemma}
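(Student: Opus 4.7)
The plan is to unpack the identity $C_G(x)=Z(G)\cup xZ(G)$ into a statement about which noncentral elements commute with $x$, which matches cleanly with the characterizations already available in Lemma \ref{L1} and Lemma \ref{L7}. Before starting, I would observe that $Z(G)\cup xZ(G)=Z(G)\langle x\rangle$ is always an abelian subgroup, since every element has the form $z$ or $xz$ with $z\in Z(G)$ and these all commute. So on the set level the identity says: an element $y\in G$ commutes with $x$ if and only if $y\in Z(G)$ or $y\in xZ(G)$.

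For the forward direction, assume $C_G(x)=Z(G)\cup xZ(G)$ for every $x\in G\setminus Z(G)$. Each $C_G(x)$ is then abelian by the observation above, so $G$ is a $\CA$-group. To get $[x]_{\sim_1}=[x]_{\sim_2}$ for all $x$, the central case is immediate since both classes equal $Z(G)$. For $x\in G\setminus Z(G)$, Lemma \ref{L6} gives $[x]_{\sim_2}\subseteq[x]_{\sim_1}$; conversely, if $y\in[x]_{\sim_1}$, then $y\in C_G(x)=Z(G)\cup xZ(G)$. The possibility $y\in Z(G)$ would force $C_G(x)=C_G(y)=G$, contradicting $x\notin Z(G)$, so $y\in xZ(G)$ and hence $y\in[x]_{\sim_2}$.

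For the reverse direction, assume $G$ is a $\CA$-group with $[x]_{\sim_1}=[x]_{\sim_2}$ for every $x$, and fix $x\in G\setminus Z(G)$. The inclusion $Z(G)\cup xZ(G)\subseteq C_G(x)$ is automatic. For the opposite inclusion, take $y\in C_G(x)$. If $y\in Z(G)$ there is nothing to prove, so assume $y\in G\setminus Z(G)$. Then $[x,y]=1$ together with the $\CA$-property and Lemma \ref{L1}(2) yield $C_G(x)=C_G(y)$, i.e.\ $y\in[x]_{\sim_1}=[x]_{\sim_2}=xZ(G)$. This gives $C_G(x)\subseteq Z(G)\cup xZ(G)$, as required.

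There is no real obstacle here; the statement is essentially a repackaging of the equivalence (1)$\Leftrightarrow$(2) from Lemma \ref{L7}, since the condition $C_G(x)=Z(G)\cup xZ(G)$ for all $x\notin Z(G)$ is exactly the assertion that two noncentral elements commute only when they lie in the same coset of $Z(G)$, which is Lemma \ref{L7}(1). The only point needing some care is handling the edge case $y\in Z(G)$ in the forward direction, to ensure one does not inadvertently conclude that a central element lies in $xZ(G)$ when $x$ itself is noncentral.
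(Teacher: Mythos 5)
Your proof is correct and follows essentially the same route as the paper's: abelianness of $Z(G)\cup xZ(G)$ gives the $\CA$-property, Lemma \ref{L6} plus the hypothesis gives $[x]_{\sim_1}=[x]_{\sim_2}$, and the converse pushes a noncentral $y\in C_G(x)$ into $xZ(G)$ (you cite Lemma \ref{L1}(2) directly where the paper routes through Lemma \ref{L7}(2), which is the same argument one layer up). The only cosmetic caveat is your aside that $Z(G)\cup xZ(G)=Z(G)\langle x\rangle$ "always" holds, which requires $x^2\in Z(G)$ in general; but all you actually use is that the elements of $Z(G)\cup xZ(G)$ pairwise commute, which is true, so nothing is affected.
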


\begin{proof}
Let $C_G(x)=Z(G)\cup xZ(G)$ for all $x\in G\setminus Z(G).$ Then $C_G(x)$ is abelian for all $x\in G\setminus Z(G).$ So $G$ is a $\CA$-group.
Let $y\in [x]_{\sim_1}.$ Therefore $y\in C_G(x)\setminus Z(G)$ which implies $y\in xZ(G)$. Hence $[x]_{\sim_1} \subseteq [x]_{\sim_2}.$
By Lemma \ref{L6} $[x]_{\sim_1}= [x]_{\sim_2}.$

Conversely let $G$ be a $\CA$-group and $[x]_{\sim_1}=[x]_{\sim_2},$ for all $x\in G.$ clearly $Z(G)\cup xZ(G)\subseteq C_G(x)$.
Suppose $y\in C_G(x)\setminus Z(G).$ Then $[x,y]=1$ and by Lemma \ref{L7}(2), $xZ(G)=yZ(G)$ which implies that $y\in xZ(G)$.
Hence $C_G(x)\subseteq Z(G)\cup xZ(G)$ and therefore $C_G(x)=Z(G)\cup xZ(G)$.
\end{proof}

\begin{example}
The dihedral group $D_8$ is a $\CA$-group and $C_{D_8}(x)=Z(D_8)\cup xZ(D_8)$.
\end{example}

\begin{lemma}\label{L11}
Let $G$ be a non-abelian group. Then the following are equivalent:
\begin{itemize}
\item[1)] $|G|=4|Z(G)|.$
\item[2)] $G$ is a $\CA$-group, $[x]_{\sim_1}=[x]_{\sim_2}$ for all $ x\in G\setminus Z(G)$ and $|G'|=2$.
\item[3)] $G=A\times P$, where $A$ is an abelian group, $P$ is both a 2-group and a $\CA$-group and $|P'|=2$.
\end{itemize}
\end{lemma}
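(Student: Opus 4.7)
The plan is to prove the implications cyclically: $1)\Rightarrow 2)\Rightarrow 3)\Rightarrow 1)$. Lemmas \ref{L1}, \ref{L9}, and \ref{L10} carry most of the load, with the only genuine counting argument appearing in the last implication. The easy step is $2)\Rightarrow 3)$, which is Lemma \ref{L9}(3) almost verbatim together with a bookkeeping check.

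For $1)\Rightarrow 2)$, the starting observation is that $|G/Z(G)|=4$ and $G$ non-abelian force $G/Z(G)\cong \mathbb{Z}_2\times\mathbb{Z}_2$ (the cyclic case would make $G$ abelian). Hence $G$ has class two and $x^2\in Z(G)$ for every $x$. For a non-central $x$ the set $Z(G)\cup xZ(G)$ is an abelian subgroup of index $2$ contained in $C_G(x)$, and Lagrange forces equality, so Lemma \ref{L10} yields both the $\CA$-property and $[x]_{\sim_1}=[x]_{\sim_2}$. For $|G'|=2$, bilinearity of the commutator modulo the centre (valid in class two) gives $[x,y]^2=[x^2,y]=1$; writing $G/Z(G)=\langle aZ(G),bZ(G)\rangle$, the derived subgroup collapses to $\langle[a,b]\rangle$, which is cyclic of order exactly $2$.

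For $2)\Rightarrow 3)$, apply Lemma \ref{L9}(3) to $G$ to get $G=A\times P$ with $A$ abelian and $P$ a $2$-group whose quotient by its centre is elementary abelian. Since $G=A\times P$ yields $G'=P'$ and $Z(G)=A\times Z(P)$, one gets $|P'|=2$ at once. For a non-central element $(1,x)\in G$, the identity $C_G((1,x))=A\times C_P(x)$ together with the $\CA$ hypothesis on $G$ forces $C_P(x)$ to be abelian, so $P$ is a $\CA$-group.

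For $3)\Rightarrow 1)$, I only need $|P:Z(P)|=4$ because $|G|/|Z(G)|=|P|/|Z(P)|$. From $|P'|=2$, every non-central element has conjugacy class of size $2$, hence its centralizer has index $2$ in $P$. Each such centralizer is abelian by the $\CA$ hypothesis, and Lemma \ref{L1}(2) shows that two non-central elements with distinct centralizers cannot commute, so distinct centralizers meet exactly in $Z(P)$. Consequently $P\setminus Z(P)$ decomposes as a disjoint union
\[
P\setminus Z(P)=\bigsqcup_{i=1}^{n}\bigl(A_i\setminus Z(P)\bigr),
\]
where $A_1,\dots,A_n$ are the distinct non-central centralizers. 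Equating cardinalities gives $|P|-|Z(P)|=n\bigl(|P|/2-|Z(P)|\bigr)$; writing $t=|P:Z(P)|$, this rearranges to $n=2+2/(t-2)$, so $t-2$ divides $2$. Of the two surviving values $t\in\{3,4\}$ only $t=4$ is a power of $2$, giving $|G|=4|Z(G)|$. The main obstacle is setting up the disjoint-union identity cleanly: it is simply a careful application of Lemma \ref{L1}(2), but one must resist counting before knowing the $A_i\setminus Z(P)$ are pairwise disjoint.
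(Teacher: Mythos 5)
Your proposal is correct. The cyclic structure and the first two implications essentially mirror the paper: for $1)\Rightarrow 2)$ both arguments boil down to showing $C_G(x)=Z(G)\cup xZ(G)$ (the paper by listing the four cosets, you by an index-$2$ Lagrange argument, which is a bit cleaner) and then invoking Lemma \ref{L10}, while for $2)\Rightarrow 3)$ both routes pass through the regularity of $\Gamma(G)$ and Lemma \ref{L2} (you via Lemma \ref{L9}(3), which packages the same step). Where you genuinely diverge is $3)\Rightarrow 1)$: the paper applies Lemma \ref{L9}(1) and Lemma \ref{L10} to get $|C_P(x)|=2|Z(P)|$ and combines this with $|x^P|=2$, whereas you avoid Lemmas \ref{L9}(1) and \ref{L10} entirely and instead partition $P\setminus Z(P)$ into the sets $A_i\setminus Z(P)$ using Lemma \ref{L1}(2) to see that distinct non-central centralizers meet in $Z(P)$, then solve $n=2+2/(t-2)$ for $t=|P:Z(P)|$ a power of $2$. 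Your version is longer but more self-contained (it uses only the $\CA$ hypothesis and $|P'|=2$ directly), and as a byproduct it recovers $|\cent(P)|=4$; the paper's version is shorter because it leans on the already-established equivalence $[x]_{\sim_1}=[x]_{\sim_2}$. Both are valid; just make sure, as you note, to record that $t>2$ (i.e.\ $P/Z(P)$ is not cyclic) before dividing by $t-2$.
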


\begin{proof}
$1)\Rightarrow 2)$ Let $|G|=4|Z(G)|$. Since $G$ is non-abelian there exist $x,y\in G$ such that $[x, y]\neq 1$.
So $xxy\neq xyx,yxy\neq xyy.$ Hence $G=Z(G)\cup xZ(G)\cup yZ(G)\cup xyZ(G).$ So $C_G(t)=Z(G)\cup tZ(G)$ for
$t\in G\setminus Z(G).$ By Lemma \ref{L10}, $G$ is a $\CA$-group and $[x]_{\sim_1}=[x]_{\sim_2}$ for all $x\in G$.
Since $G$ is non-abelian and $|G|=4|Z(G)|$ we find that $G/Z(G)$ is an elementary abelian 2-group. Hence $G'$ is an
elementary abelian 2-group. Therefore if $[x,y]=t$ then $o(t)=2$. Let $a,b\in G\setminus Z(G)$,
then $a$ and $b$ are one of $xz, yz, xyz$ for some $z\in Z(G)$. So $[a,b]=[x,y]=t$. Therefore $G'=\{1,t\}$ and $|G'|=2$.

$2)\Rightarrow 3)$ Let $G$ be a $\CA$-group, $[x]_{\sim_1}=[x]_{\sim_2}$ for all $ x\in G\setminus Z(G)$ and
$|G'|=2$. Then $|x^G|=2, x\in G\setminus Z(G).$ By Lemma \ref{L2}, $G=A\times P$ where $A$ is an abelian group and
$P$ is a 2-group. Since $G$ is a $\CA$-group $P$ is also a $\CA$-group, because $C_{A\times P}(a,x)=A\times C_P(x)$
where $(a,x)\in A\times P.$ Since $|G'|=2$ it follows $|P'|=2.$

$3)\Rightarrow 1)$ Let $G=A\times P$, where $A$ is an abelian group, $P$ is a 2-group, $P$ is a
$\CA$-group and $|P'|=2.$ Since $|P'|=2$, by Lemma \ref{L9}, we have $[x]_{\sim_1}=[x]_{\sim_2}$ for all
$x\in P$.
By Lemma \ref{L10}, $C_P(x)=Z(P)\cup xZ(P).$ Therefore $|C_P(x)|=2|Z(P)|$. Since $|P'|=2$
we have $|x^P|=2$ where $x\in P\setminus Z(P).$ Hence $|x^P|=2=|P|/|C_P(x)|=|P|/2|Z(P)|,$ where $x\in P\setminus Z(P).$
So $|P|=4|Z(P)|.$ Now $G=A\times P$ implies that $|G|=4|Z(G)|.$
\end{proof}

\begin{lemma}\label{L12}
Let $G$ be a non-abelian group and $Z(G)\cong \mathbb{Z}_t.$ Then the following are equivalent:
\begin{itemize}
\item[1)] $[x]_{\sim_1}=[x]_{\sim_2}$ for all $x\in G.$
\item[2)] $G/Z(G)$ is an elementary abelian 2-group.
\item[3)] $G=A\times P$ and $|P'|=2$, $P$ is a 2-group, $A\cong \mathbb{Z}_m, m$ is odd and $Z(P)$ is cyclic.
\end{itemize}
\end{lemma}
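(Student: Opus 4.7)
The strategy is to close a cyclic chain $(1) \Rightarrow (2) \Rightarrow (3) \Rightarrow (1)$, leaning on Lemmas~\ref{L5}, \ref{L6}, and \ref{L9}.

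For $(1) \Rightarrow (2)$, the hypothesis is, by the remark following Lemma~\ref{L6}, precisely the assertion $|\cent(G)| = |G|/|Z(G)|$, so Lemma~\ref{L5} yields (2) directly.

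For $(2) \Rightarrow (3)$, I would first extract that $G$ has nilpotency class at most $2$ (since $G/Z(G)$ is abelian), so $G' \leq Z(G)$ and commutator brackets are bilinear modulo the center. Since every $x\in G$ satisfies $x^2 \in Z(G)$, the identity $[x,y]^2 = [x^2,y] = 1$ forces $G'$ to have exponent $2$; being a subgroup of the cyclic group $Z(G) \cong \mathbb{Z}_t$, its order is at most $2$, and non-abelianness of $G$ forces $|G'|=2$. Next, every element of odd order in $G$ projects to the identity in the $2$-group $G/Z(G)$, hence is central, so the Hall $2'$-subgroup $A$ of $G$ sits inside the cyclic group $Z(G)$ and is therefore cyclic of odd order $m$. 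Nilpotency then gives the internal direct decomposition $G = P \times A$ with $P$ the Sylow $2$-subgroup, so $Z(G) = Z(P) \times A$ is cyclic, which forces $Z(P)$ cyclic, and $G' = P'$ gives $|P'|=2$.

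For $(3) \Rightarrow (1)$, I would write a typical element of $G = A \times P$ as $(a,y)$. Since $A$ is central in $G$, one has $C_G((a,y)) = A \times C_P(y)$ and $Z(G) = A \times Z(P)$, so both the $\sim_1$- and $\sim_2$-equivalence classes of $(a,y)$ in $G$ are of the form $A \times (\text{class of }y\text{ in }P)$ under the respective relations on $P$. Lemma~\ref{L9}(1), applied to $P$ with $|P'|=2$, gives $[y]_{\sim_1} = [y]_{\sim_2}$ inside $P$ for every $y \in P$, and this lifts to the corresponding equality for $x=(a,y)$ in $G$.

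The main obstacle I expect is the structural part of $(2) \Rightarrow (3)$: combining the class-$2$ commutator identity with the cyclicity of $Z(G)$ to pin down $|G'|=2$, and then routing the cyclicity of $Z(G)$ through the direct product decomposition to land it on $Z(P)$. The other two implications are essentially bookkeeping organized around Lemmas~\ref{L5}, \ref{L6}, and \ref{L9}.
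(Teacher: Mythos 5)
Your proof is correct and follows the same overall skeleton as the paper's (the cyclic chain $(1)\Rightarrow(2)\Rightarrow(3)\Rightarrow(1)$, with Lemma~\ref{L5} doing the work in the first implication and Lemma~\ref{L9} in the last), but the middle implication is handled by a genuinely different route. The paper first pins down $|G'|=2$ exactly as you do (via $G'$ elementary abelian inside the cyclic $Z(G)$), but then observes that $|x^G|=2$ for all noncentral $x$, so $\Gamma(G)$ is regular, and invokes Lemma~\ref{L2} to obtain the decomposition $G=A\times P$; you instead note that $G$ has class at most $2$, that every element of odd order dies in the $2$-group $G/Z(G)$ and is therefore central, and then take the Sylow/Hall decomposition of the nilpotent group $G$ directly. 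Your argument is more self-contained (it avoids the non-commuting-graph machinery of \cite{AAM}) and makes explicit why $A$ lands inside $Z(G)$ and is cyclic of odd order, a point the paper leaves to the reader via ``$Z(G)=A\times Z(P)$ is cyclic.'' In $(3)\Rightarrow(1)$ the paper applies Lemma~\ref{L9} to $G$ itself, using $|G'|=|P'|=2$, while you apply it to $P$ and lift through the product structure (which is essentially Lemma~\ref{L13}); both are fine. No gaps.
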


\begin{proof}
$1) \Rightarrow 2)$  It is an immediate consequence of Lemma \ref{L5}.

$2) \Rightarrow 3)$  Let $G/Z(G)$ be an elementary abelian 2-group. Then $G'$ is an
elementary abelian 2-group. Since $Z(G)$ has only one element of order 2
$G'=\{1,t\}$ and hence $|P'|=2.$ From $x^G\subseteq xG',$ we find that $|x^G|=2,$ for all $x\in G\setminus Z(G)$ and
$\Gamma(G)$ is regular. By Lemma \ref{L2} $G=A\times P,$ where $A$ is an abelian group and $P$ is a $p$-group.
Since $G/Z(G)$ is a 2-group, $P/Z(P)$ is also a 2-group.
Therefore $P$ is a 2-group. also $Z(G)=A\times Z(P)$ is cyclic which implies that $A\cong \mathbb{Z}_m,$ where $m$
is odd and $Z(P)$ is cyclic.

$3) \Rightarrow 1)$ Since $|G'|=2$, by Lemma \ref{L9}, $[x]_{\sim_1}=[x]_{\sim_2}$ for all $x\in G.$
\end{proof}

\begin{proposition}\label{P1}
Let $G$ be a $\CA$-group and $|Z(G)|=p$ where $p$ is a prime. Then $[x]_1=[x]_2$ for all $x\in G$ if and only if $G\cong D_8$ or $Q_8.$
\end{proposition}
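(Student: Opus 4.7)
The plan is to force $G$ to be a non-abelian $2$-group of order $8$ with center of order $2$, and then invoke the classification of groups of order $8$. Throughout, the hypothesis $[x]_{\sim_1}=[x]_{\sim_2}$ means precisely that $|\cent(G)|=|G|/|Z(G)|$.

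For the easy direction, both $D_{8}$ and $Q_{8}$ are $\CA$-groups of order $8$ with $|Z(G)|=2$ and $|G'|=2$; thus condition $(3)$ of Lemma~\ref{L11} holds with $A$ trivial, and the implication $(3)\Rightarrow(2)$ of that lemma delivers $[x]_{\sim_1}=[x]_{\sim_2}$ for every $x\in G$.

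For the converse, suppose $G$ is a non-abelian $\CA$-group with $|Z(G)|=p$ prime and the two equivalence classes coincide. First I would apply Lemma~\ref{L5} to conclude that $G/Z(G)$ is an elementary abelian $2$-group, so in particular $|G|/|Z(G)|$ is a power of $2$. The hard part will be to show that $p$ itself equals $2$. My plan is to use a Sylow argument: if $p$ were odd and $P$ is a Sylow $2$-subgroup of $G$, then $\gcd(p,|P|)=1$ gives $Z(G)\cap P=1$ and $|Z(G)\cdot P|=|Z(G)||P|=|G|$, so centrality of $Z(G)$ forces $G=Z(G)\times P$. But then $Z(P)$ would be a $2$-power order subgroup contained in $Z(G)$, hence trivial, contradicting the non-triviality of the center of a non-trivial $2$-group (and $P\neq 1$ since $G$ is non-abelian).

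With $p=2$ in hand, observe that $G'\leq Z(G)$ because $G/Z(G)$ is abelian, and $G'\neq 1$ because $G$ is non-abelian, so $|G'|=2$. Now Lemma~\ref{L11} yields $|G|=4|Z(G)|=8$, and the non-abelian groups of order $8$ whose center has order $2$ are exactly $D_{8}$ and $Q_{8}$ (both $\CA$-groups), finishing the proof. Apart from the step ruling out odd $p$, the argument is a chain of direct applications of Lemmas~\ref{L5} and~\ref{L11} together with the classification of small non-abelian groups.
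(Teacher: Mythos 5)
Your proof is correct and follows essentially the same route as the paper: Lemma~\ref{L5} to get that $G/Z(G)$ is an elementary abelian $2$-group, Lemma~\ref{L11} to force $|G|=4|Z(G)|=8$, and the classification of non-abelian groups of order $8$. The only (valid) variation is your Sylow-decomposition argument for ruling out odd $p$; the paper gets $p=2$ more directly from the fact that $1\neq G'\leq Z(G)$ and $G'$ is an elementary abelian $2$-group, so that $Z(G)=G'$ has order $2$.
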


\begin{proof}
Let $[x]_{\sim_1}=[x]_{\sim_2}.$ By Lemma \ref{L5} $G/Z(G)$ and $G'$ are both elementary abelian 2-group.
Therefore $|Z(G)|=|G'|=2$ and $G$ is an extra-special 2-group. Since $G$ is a $\CA$-group and $[x]_{\sim_1}=[x]_{\sim_2}$
we find by Lemma \ref{L11}, that $|G|=4|Z(G)|$. Hence $|G|=8.$ Since the extra-special
groups of order $8$ are $D_8$ and $Q_8$, we have $G\cong D_8$ or $Q_8$.

Conversely $D_8$ and $Q_8$ are $\CA$-groups and $[x]_{\sim_1}=[x]_{\sim_2}$ for all $x\in D_8, Q_8$.
\end{proof}

\begin{lemma}\label{L13}
Let $G_1$ and $G_2$ be two groups. Let $[g_1]_{\sim_1}=[g_1]_{\sim_2}$, for all $g_1\in G_1$
and $[g_2]_{\sim_1}=[g_2]_{\sim_2}$ for all $g_2\in G_2$. Then $[X]_{\sim_1}=[X]_{\sim_2}$, for all $X\in G_1\times G_2$.
\end{lemma}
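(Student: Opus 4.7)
The plan is to verify the nontrivial inclusion $[X]_{\sim_1}\subseteq [X]_{\sim_2}$; the reverse inclusion is immediate from Lemma \ref{L6} applied to $G_1\times G_2$. The whole argument reduces to exploiting two coordinate-wise facts about direct products:
\[
  C_{G_1\times G_2}\bigl((x_1,x_2)\bigr)=C_{G_1}(x_1)\times C_{G_2}(x_2),
  \qquad
  Z(G_1\times G_2)=Z(G_1)\times Z(G_2).
\]

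First, I would fix $X=(x_1,x_2)\in G_1\times G_2$ and an arbitrary $Y=(y_1,y_2)\in [X]_{\sim_1}$. By definition, $C_{G_1\times G_2}(X)=C_{G_1\times G_2}(Y)$, so by the first displayed identity
\[
  C_{G_1}(x_1)\times C_{G_2}(x_2) \;=\; C_{G_1}(y_1)\times C_{G_2}(y_2).
\]
Next, I would observe that a subgroup of $G_1\times G_2$ written as a product $A\times B$ with $A\le G_1$, $B\le G_2$ determines $A$ and $B$ uniquely (they are the images under the coordinate projections). Applying this to the equality above, I obtain $C_{G_1}(x_1)=C_{G_1}(y_1)$ and $C_{G_2}(x_2)=C_{G_2}(y_2)$; in other words, $y_i\in [x_i]_{\sim_1}$ in $G_i$ for $i=1,2$.

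Now I would invoke the hypothesis: since $[x_i]_{\sim_1}=[x_i]_{\sim_2}$ in $G_i$, we get $y_i\in x_iZ(G_i)$, i.e.\ there exist $z_i\in Z(G_i)$ with $y_i=x_iz_i$. Using the second displayed identity,
\[
  Y=(x_1z_1,x_2z_2)=(x_1,x_2)(z_1,z_2)\in X\cdot Z(G_1\times G_2),
\]
so $Y\in [X]_{\sim_2}$. This gives $[X]_{\sim_1}\subseteq [X]_{\sim_2}$ and, combined with Lemma \ref{L6}, the desired equality.

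There is no real obstacle here; the only point that deserves a line of justification is the uniqueness of the factors in a subgroup of the form $A\times B$ of $G_1\times G_2$, which is what lets us descend the equality of centralizers from $G_1\times G_2$ to each coordinate.
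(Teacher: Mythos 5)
Your proposal is correct and follows essentially the same route as the paper's own proof: equate the coordinate-wise centralizers via $C_{G_1\times G_2}((x_1,x_2))=C_{G_1}(x_1)\times C_{G_2}(x_2)$, apply the hypothesis in each factor, and recombine using $Z(G_1\times G_2)=Z(G_1)\times Z(G_2)$, with the reverse inclusion coming from Lemma \ref{L6}. Your explicit remark on the uniqueness of the factors $A$ and $B$ in a product subgroup $A\times B$ is a small justification the paper leaves implicit, but otherwise the arguments coincide.
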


\begin{proof}
Let $[g_1]_{\sim_1}=[g_1]_{\sim_2}$ for all $g_1\in G_1$ and $[g_2]_{\sim_1}=[g_2]_{\sim_2}$ for all
$g_2\in G_2$. Let $Y\in [X]_{\sim_1}$ where
$X, Y\in G_1\times G_2$. So there exist $a_1, b_1\in G_1$ and $a_2, b_2\in G_2$ such that $X=(a_1, a_2)$
and $Y=(b_1, b_2).$ Then $C_{G_1\times G_2}(a_1, a_2)=C_{G_1\times G_2}(b_1, b_2)$. Therefore
$C_{G_1}(a_1)\times C_{G_2}(a_2)=C_{G_1}(b_1)\times C_{G_2}(b_2).$ Moreover
$C_{G_1}(a_1)=C_{G_1}(b_1)$ and $C_{G_2}(a_2)=C_{G_2}(b_2).$
By assumption $a_1Z(G_1)=b_1Z(G_1)$ and $a_2Z(G_2)=b_2Z(G_2)$. So $a_1Z(G_1)\times a_2Z(G_2)=b_1Z(G_1)\times b_2Z(G_2)$.
Therefore $(a_1, a_2)(Z(G_1)\times Z(G_2))=(b_1, b_2)(Z(G_1)\times Z(G_2))$ and $Y\in [X]_{\sim_2}$.
Hence $[X]_{\sim_1}\subseteq [X]_{\sim_2}$. Now by Lemma \ref{L6}, $[X]_{\sim_1}=[X]_{\sim_2}$ for all $X\in G_1\times G_2.$
\end{proof}

\begin{proposition}\label{P2}
Let $G$ be a non-abelian group. Let $[x]_{\sim_1}=[x]_{\sim_2},$ for all $x\in G.$
Then $C_G(x)\unlhd G,$ for all $x\in G.$
\end{proposition}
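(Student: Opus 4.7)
I would observe that $C_G(x)=G$ is trivially normal when $x\in Z(G)$, so the only interesting case is $x\in G\setminus Z(G)$. Fix such an $x$ and an arbitrary $g\in G$; via the standard identity $g\,C_G(x)\,g^{-1}=C_G(gxg^{-1})$, the claim that $C_G(x)\unlhd G$ reduces to showing $C_G(gxg^{-1})=C_G(x)$, i.e.\ $gxg^{-1}\sim_1 x$.

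The hypothesis $[x]_{\sim_1}=[x]_{\sim_2}$ for every $x\in G$ is precisely the equality $|\cent(G)|=|G|/|Z(G)|$ (noted right after Lemma~\ref{L6}), so Lemma~\ref{L5} applies and $G/Z(G)$ is elementary abelian of exponent $2$. In particular $G/Z(G)$ is abelian, so $G'\subseteq Z(G)$ and $G$ is nilpotent of class at most $2$.

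This is exactly the control over conjugation that I need: for any $g\in G$ one has $gxg^{-1}x^{-1}=[g,x]\in G'\subseteq Z(G)$, so $gxg^{-1}\in x\,Z(G)=[x]_{\sim_2}$. Lemma~\ref{L6} then yields $gxg^{-1}\in[x]_{\sim_1}$, i.e.\ $C_G(gxg^{-1})=C_G(x)$, and combining with the conjugation identity above gives $g\,C_G(x)\,g^{-1}=C_G(x)$, as required.

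I do not foresee any real obstacle: the whole argument is a one-line consequence of nilpotency of class~two (supplied by Lemma~\ref{L5}) together with the always-true inclusion $[x]_{\sim_2}\subseteq[x]_{\sim_1}$ of Lemma~\ref{L6}. It is worth noting that only the easy half of the hypothesis is used in the conjugation step; the content of the assumption enters solely through invoking Lemma~\ref{L5} to obtain $G'\subseteq Z(G)$.
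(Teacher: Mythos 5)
Your proof is correct and rests on exactly the same key fact as the paper's: the hypothesis forces (via Lemma~\ref{L5}) $G/Z(G)$ to be abelian, so conjugation moves any element only within its $Z(G)$-coset. The paper applies this directly to an arbitrary $a\in C_G(x)$ (getting $g^{-1}ag=az\in C_G(x)$ immediately), whereas you conjugate $x$ itself and route through the identity $gC_G(x)g^{-1}=C_G(gxg^{-1})$ plus Lemma~\ref{L6}; this is a cosmetic difference only, and both arguments are sound.
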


\begin{proof}
Let $a\in C_G(x)$ and $g\in G.$ Since $[x]_{\sim_1}=[x]_{\sim_2},$ for all $x\in G$, by
Lemma \ref{L5}, $G/Z(G)$ is an elementary abelian 2-group. So $g^{-1}agZ(G)=aZ(G)$.
Therefore there exists $z\in Z(G)$ such that $g^{-1}ag=az$. Since $az\in C_G(x)$, we find that $g^{-1}ag\in C_G(x)$.
Hence $C_G(x)\unlhd G$, for all $x\in G$.
\end{proof}

\begin{proposition}\label{P3}
Let $G$ be a non-abelian group. Then $G$ is a $\CA$-group and $[x]_{\sim_1}=[x]_{\sim_2},$ for all $x\in G$ if and only if
$|G|=\frac{2|Z(G)|^2}{(3|Z(G)|-k)}$, where $k$ is the number of conjugacy classes of $G$. In particular
if $G$ is a $\CA$-group and $[x]_{\sim_1}=[x]_{\sim_2},$ for all $ x\in G$, then $|G|\leq 2|Z(G)|^2.$
\end{proposition}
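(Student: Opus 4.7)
The plan is to convert the $\CA$-plus-equal-equivalence-classes hypothesis into precise information about centralizer sizes via Lemma \ref{L10}, then read off $k$ from the class equation; for the converse, run the same class-equation computation as an inequality and force equality.

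\medskip

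\noindent\textbf{Forward direction.} Suppose $G$ is a $\CA$-group with $[x]_{\sim_1}=[x]_{\sim_2}$ for all $x\in G$. By Lemma \ref{L10}, $C_G(x)=Z(G)\cup xZ(G)$ for every $x\in G\setminus Z(G)$, and the two cosets are disjoint since $x\notin Z(G)$, giving $|C_G(x)|=2|Z(G)|$. Consequently every non-central conjugacy class has the same size $|G|/(2|Z(G)|)$, so the number of non-central classes is $(|G|-|Z(G)|)\cdot 2|Z(G)|/|G|$. Adding the $|Z(G)|$ singleton central classes,
\[
k \;=\; |Z(G)|+\frac{2|Z(G)|(|G|-|Z(G)|)}{|G|}\;=\;3|Z(G)|-\frac{2|Z(G)|^2}{|G|},
\]
which rearranges to the claimed formula $|G|=2|Z(G)|^2/(3|Z(G)|-k)$.

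\medskip

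\noindent\textbf{Converse.} Now assume the formula, equivalently $k=3|Z(G)|-2|Z(G)|^2/|G|$. For any $x\in G\setminus Z(G)$, the subset $Z(G)\cup xZ(G)\subseteq C_G(x)$ has cardinality exactly $2|Z(G)|$, so $|C_G(x)|\geq 2|Z(G)|$ and hence $|x^G|\leq |G|/(2|Z(G)|)$. Summing over non-central classes (of which there are $k-|Z(G)|$) gives
\[
|G|-|Z(G)|\;=\;\sum_{\text{non-central }C}|C|\;\leq\;\bigl(k-|Z(G)|\bigr)\cdot\frac{|G|}{2|Z(G)|},
\]
which, after clearing denominators, is equivalent to $k\geq 3|Z(G)|-2|Z(G)|^2/|G|$. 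The hypothesis is that this inequality is an equality, so equality must hold term-by-term: $|C_G(x)|=2|Z(G)|$ for every $x\in G\setminus Z(G)$. Combined with the containment $Z(G)\cup xZ(G)\subseteq C_G(x)$ of equal-cardinality sets, we conclude $C_G(x)=Z(G)\cup xZ(G)$, and Lemma \ref{L10} yields that $G$ is a $\CA$-group with $[x]_{\sim_1}=[x]_{\sim_2}$ for all $x\in G$.

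\medskip

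\noindent\textbf{The ``in particular'' bound.} Under either hypothesis, $2|Z(G)|^2/|G|=3|Z(G)|-k$ is a \emph{positive} rational that is also a \emph{difference of integers}, hence a positive integer; in particular $3|Z(G)|-k\geq 1$, which immediately gives $|G|=2|Z(G)|^2/(3|Z(G)|-k)\leq 2|Z(G)|^2$.

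\medskip

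There is essentially no serious obstacle: the only mild care needed is in the converse, where one must notice that the hypothesized equation is \emph{exactly} the equality case of the inequality $|C_G(x)|\geq 2|Z(G)|$, so it rigidifies every non-central centralizer and lets Lemma \ref{L10} close the argument.
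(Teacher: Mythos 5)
Your proof is correct, and the converse direction is essentially the paper's own argument: both establish $|x^G|\leq |G|/(2|Z(G)|)$ from the containment $Z(G)\cup xZ(G)\subseteq C_G(x)$, observe that the hypothesized identity is exactly the equality case, and then invoke Lemma \ref{L10}. Where you differ is the forward direction: the paper counts edges of the non-commuting graph, pairing the formula $|E(\Gamma(G))|=\binom{m-1}{2}|Z(G)|^2$ (with $m=|G|/|Z(G)|$; the paper's displayed exponent is a typo for this binomial coefficient) against $|E(\Gamma(G))|=\tfrac{|G|^2-k|G|}{2}$ from \cite[Lemma 3.27]{AAM}, whereas you read $k$ directly off the class equation once Lemma \ref{L10} gives $|C_G(x)|=2|Z(G)|$. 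Your route is more elementary and self-contained -- it needs no external graph-theoretic input and makes the forward and converse directions two faces of the same counting identity -- and you also supply the integrality argument ($3|Z(G)|-k$ is a positive integer, hence at least $1$) that justifies the ``in particular'' bound, a step the paper leaves entirely implicit.
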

\begin{proof}
Let $G$ be a $\CA$-group and $[x]_{\sim_1}=[x]_{\sim_2},$ for all $x\in G$. Since there are $|Z(G)|^2$ edges
between two classes $xZ(G)\neq yZ(G)$, we have $|E(\Gamma(G))|=(2^{\frac{|G|}{Z(G)}-1})|Z(G)|^2$. By \cite[Lemma 3.27]{AAM},
$|E(\Gamma(G))|=\frac{|G|^2-k|G|}{2}$. Therefore $|G|=\frac{2|Z(G)|^2}{3|Z(G)|-k}$.

Conversely let $|G|=\frac{2|Z(G)|^2}{(3|Z(G)|-k)}$. So $|G|=|Z(G)|+(k-|Z(G)|)\frac{|G|}{2|Z(G)|}$.
On the other hand for all $x\in G\setminus Z(G)$, $|x^G|\leq \frac{|G|}{2|Z(G)}$. Hence
$|x^G|=\frac{|G|}{2|Z(G)|}$, for all $x\in G\setminus Z(G)$. So $|C_G(x)|=2|Z(G)|$, for
all $x\in G\setminus Z(G)$. Now by Lemma \ref{L10}, $G$ is a $\CA$-group and $[x]_{\sim_1}=[x]_{\sim_2}.$
\end{proof}
\begin{proposition}\label{P4}
Let $G$ be a $\CA$-group and $[x]_{\sim_1}=[x]_{\sim_2}$, for all $x\in G$. Assume that
$[x,y]\neq 1$ for $x,y\in G\setminus Z(G)$, then there exists one and only one $[w]_{\sim_2}$ so that
$[w]_{\sim_2}\neq [y]_{\sim_2}$ and $x^y=x^w$. Moreover if $[w]_{\sim_2}\neq [y]_{\sim_2}$ for $w\in G$
and $x^y=x^w$, then $[w]_{\sim_2}=[xy]_{\sim_2}$.
\end{proposition}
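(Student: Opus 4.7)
The plan is to translate the conjugation equation $x^y = x^w$ into a centralizer-membership condition, and then exploit the explicit description of $C_G(x)$ that our hypotheses provide.

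First I would record the standard identity $x^y = x^w \iff yw^{-1} \in C_G(x)$, obtained by rewriting $y^{-1}xy = w^{-1}xw$ as $(wy^{-1})x(wy^{-1})^{-1} = x$. This is essentially the only algebraic manipulation in the proof; everything else is structural.

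Under the present hypotheses, Lemma~\ref{L10} gives $C_G(x) = Z(G)\cup xZ(G)$ for every noncentral $x$. Combining this with the previous observation, the element $w$ satisfies $x^y = x^w$ precisely when $yw^{-1} \in Z(G)$ (equivalently $wZ(G) = yZ(G)$, i.e.\ $[w]_{\sim_2}=[y]_{\sim_2}$) or $yw^{-1} \in xZ(G)$ (equivalently $wZ(G) = x^{-1}yZ(G)$). By Lemma~\ref{L5}, $G/Z(G)$ is an elementary abelian $2$-group, so $x^{-1}Z(G) = xZ(G)$ and the second alternative collapses to $[w]_{\sim_2}=[xy]_{\sim_2}$. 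This immediately establishes the ``Moreover'' clause: any $w$ with $x^w = x^y$ and $[w]_{\sim_2}\neq[y]_{\sim_2}$ must lie in $[xy]_{\sim_2}$, and this already delivers the uniqueness half of the main statement.

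For existence I would simply verify the candidate $w=xy$: a direct computation gives $x^{xy}=(xy)^{-1}x(xy)=y^{-1}x^{-1}\cdot x\cdot xy=y^{-1}xy=x^y$, and $[xy]_{\sim_2}\neq[y]_{\sim_2}$ because $(xy)y^{-1}=x\notin Z(G)$. There is no real obstacle in this argument; the only conceptual point worth highlighting is that the exponent-$2$ property of $G/Z(G)$ is what lets us identify the ``twisted'' coset $x^{-1}yZ(G)$ with the expected coset $xyZ(G)$, so the two structural lemmas (L5 and L10) fit together exactly as needed.
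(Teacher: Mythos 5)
Your proof is correct and follows essentially the same route as the paper's: existence via the candidate $w=xy$ together with the observation that $[xy]_{\sim_2}\neq[y]_{\sim_2}$ since $x\notin Z(G)$, and uniqueness by translating $x^y=x^w$ into membership of $C_G(x)=Z(G)\cup xZ(G)$ (Lemma~\ref{L10}). The only cosmetic difference is that you work with $yw^{-1}$ rather than $wy^{-1}$ and therefore invoke Lemma~\ref{L5} to identify $x^{-1}yZ(G)$ with $xyZ(G)$, a step the paper's choice of representative avoids.
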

\begin{proof}
Let $G$ be a $\CA$-group and $[x]_{\sim_1}=[x]_{\sim_2}$, for all $x\in G$. Clearly $x^{y}=x^{xy}$. Next we show
that $[w]_{\sim_2}\neq [y]_{\sim_2}$. Suppose by contrary that $[w]_{\sim_2}=[y]_{\sim_2}$. Then $yZ(G)=xyZ(G)$ and
$x\in Z(G)$, which is a contradiction.

Let $[w]_{\sim_2}\neq [y]_{\sim_2}$ for $w\in G$ and $x^y=x^w$. Since $x^y=x^w$, we have $wy^{-1}\in C_G(x)$ and
from $[w]_{\sim_2}\neq [y]_{\sim_2}$ we find that $wZ(G)\neq yZ(G)$, which means $wy^{-1}\notin Z(G)$.
By assumption $G$ is a $\CA$-group and $[x]_{\sim_1}=[x]_{\sim_2}$, for all $x\in G$. This implies by Lemma \ref{L10},
$C_G(x)=Z(G)\cup xZ(G)$ for $x\in G\setminus Z(G)$. Hence $wy^{-1}\in xZ(G)$, $wZ(G)=xyZ(G)$ and $[w]_{\sim_2}=[xy]_{\sim_2}$.
\end{proof}
In the following we show that if $G$ is a $\CA$-group and $|\cent(G)|=|G|/|Z(G)|$, then
there exists an integer $r>1$ such that $|\cent(G)|=2^r.$

\begin{proposition}\label{P5}
Let $G$ be a $\CA$-group and $[x]_{\sim_1}=[x]_{\sim_2},$ for all $x\in G$. Then there exist
subgroups $H_i\unlhd G, i=1,...,r$ such that $Z(G)\leq H_1\leq ...\leq H_r=G$, $|H_i|=2^i|Z(G)|, i=1,...,r$;
$Z(H_i)=Z(G), i\geq 2$ and $[h]_{\sim_1}=[h]_{\sim_2}$, for all $h\in H_i, i=2,...,r.$
\end{proposition}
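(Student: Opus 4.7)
The plan is to pass to the quotient $G/Z(G)$ and exploit the fact, given by Lemma \ref{L5}, that $G/Z(G)$ is an elementary abelian $2$-group, hence an $\mathbb{F}_2$-vector space of some dimension $r$. I would fix any complete flag of subspaces $\overline{K}_0<\overline{K}_1<\cdots<\overline{K}_r=G/Z(G)$ with $|\overline{K}_i|=2^i$, and define $H_i$ to be the preimage $\pi^{-1}(\overline{K}_i)$ under the canonical projection $\pi\colon G\to G/Z(G)$. Since $G/Z(G)$ is abelian, each $\overline{K}_i$ is normal in the quotient, so each $H_i$ is normal in $G$; the order formula $|H_i|=2^i|Z(G)|$ and the chain $Z(G)\le H_1\le\cdots\le H_r=G$ are immediate from the construction.

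The arithmetic engine for the remaining two claims is Lemma \ref{L10}: it gives $C_G(h)=Z(G)\cup hZ(G)$, of cardinality $2|Z(G)|$, for every $h\in G\setminus Z(G)$. Because $Z(G)\subseteq H_i$ and $h\in H_i$ forces $hZ(G)\subseteq H_i$, this identity restricts cleanly to
\[
C_{H_i}(h)\;=\;H_i\cap C_G(h)\;=\;Z(G)\cup hZ(G),
\]
still of size $2|Z(G)|$. For $i\ge 2$ this is strictly smaller than $|H_i|=2^i|Z(G)|$, so no element of $H_i\setminus Z(G)$ can be central in $H_i$, which yields $Z(H_i)\subseteq Z(G)$; the reverse containment is tautological, so $Z(H_i)=Z(G)$.

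With $Z(H_i)=Z(G)$ in hand, I would verify the equivalence-class equality $[h]_{\sim_1}=[h]_{\sim_2}$ inside $H_i$ by checking the hypotheses of Lemma \ref{L10} relative to $H_i$. The group $H_i$ inherits the $\CA$-property from $G$, since for any $h\in H_i\setminus Z(H_i)=H_i\setminus Z(G)$ the centralizer $C_{H_i}(h)$ is a subgroup of the abelian group $C_G(h)$. Combining this with the displayed identity above, $C_{H_i}(h)=Z(H_i)\cup hZ(H_i)$, which is exactly the criterion of Lemma \ref{L10} applied inside $H_i$, and so the two equivalence relations on $H_i$ coincide.

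The only subtle point, and where a careless argument could slip, is the bookkeeping between the ambient centralizer in $G$ and the relative centralizer in $H_i$ (and correspondingly between $Z(G)$ and $Z(H_i)$). Once one observes that $H_i$ automatically contains both $Z(G)$ and the entire coset $hZ(G)$ for each $h\in H_i$, the identity $C_{H_i}(h)=Z(G)\cup hZ(G)$ does all the work, and the proposition follows from a straight application of Lemma \ref{L10} locally on each $H_i$.
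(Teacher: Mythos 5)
Your proof is correct and follows essentially the same route as the paper: both build a chain of index-$2$ normal subgroups between $Z(G)$ and $G$ by using Lemma \ref{L5} (equivalently, the elementary abelian $2$-group structure of $G/Z(G)$), the paper by adjoining generators one at a time starting from a non-commuting pair, you by pulling back a complete flag of the $\mathbb{F}_2$-vector space $G/Z(G)$. The only difference is that your write-up actually supplies the verifications the paper merely asserts --- normality from the abelian quotient, and $Z(H_i)=Z(G)$ together with $[h]_{\sim_1}=[h]_{\sim_2}$ from the identity $C_{H_i}(h)=Z(G)\cup hZ(G)$ forced by Lemma \ref{L10} --- which in particular shows the flag may be chosen arbitrarily.
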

\begin{proof}
Since $G$ is non-abelian, so there exists $x,y\in G$ such that $[x,y]\neq 1$. Let $H_1=Z(G)\langle x\rangle$.
Then $H_1\unlhd G,$ is abelian and $|H_1|=2|Z(G)|$. Let $H_2=H_1\langle y\rangle$. Then
$H_2\unlhd G, Z(H_2)=Z(G), |H_2|=2^2|Z(G)|$ and $[x]_{\sim_1}=[x]_{\sim_2},$ for all
$x\in H_2$. If $G\neq H_2$, then there exist $u\in G\setminus H_2$. Let $H_3=H_2\langle u\rangle$.
$H_3\unlhd G$, $Z(H_3)=Z(G)$, $|H_3|=2^3|Z(G)|$ and $[x]_{\sim_1}=[x]_{\sim_2}$,
for all $x\in H_3$. Since $G$ is finite, there exists a positive integer $r$ such that
$H_r=G$, $|G|=2^r|Z(G)|$. Hence $|\cent(G)|=|G|/|Z(G)|=2^r.$

\end{proof}

\subsection{Proof of Theorem \ref{T1}.}
\begin{proof}
$1) \Rightarrow 2)$  By Lemma \ref{L10}, $C_G(x)=Z(G)\cup xZ(G)$. Hence for
all $x\in G\setminus Z(G)$, $|C_G(x)|=2|Z(G)|$ and $\Gamma(G)$ is regular. So by Lemma \ref{L2}, $G=A\times P$,
where $A$ is an abelian group, $P$ is a $p$-group, $p$ a prime. By Lemma \ref{L5}, $G/Z(G)$
is an elementary abelian 2-group. Therefore $P$ is a 2-group and $P/Z(P)$ is an elementary abelian 2-group. Since
$C_G(x)=Z(G)\cup xZ(G)$, it follows for each $(a,w)\in A\times P$, we have
\begin{equation*}
\begin{split}
A\times C_P(w)&=C_{A\times P}(a,w)=Z(A\times P)\cup (a,w)Z(A\times P)\\
&=(A\times Z(P))\cup(A\times wZ(P))=A\times (Z(P)\cup wZ(P)).
\end{split}
\end{equation*}
Hence $C_P(w)=Z(P)\cup wZ(P)$.
By Lemma \ref{L10}, $P$ is a $\CA$-group and $[x]_{\sim_1}=[x]_{\sim_2}$ for all $x\in P.$

$2) \Rightarrow 3)$ Since $P$ is a $\CA$-group and $[x]_{\sim_1}=[x]_{\sim_2}$ for all $x\in P$
by Lemma \ref{L10}, $C_P(x)=Z(P)\cup xZ(P)$ for all $x\in P\setminus Z(G)$.

$3) \Rightarrow 1)$  By Lemma \ref{L10}, $P$ is a $\CA$-group and $[x]_{\sim_1}=[x]_{\sim_2}$
for all $x\in P$. Therefore by Lemma \ref{L13}, $G=A\times P$ is a $\CA$-group and $[x]_{\sim_1}=[x]_{\sim_2}$ for all $x\in G$.
\end{proof}

\subsection{Proof of Theorem \ref{T2}.}
\begin{proof}
1) By Lemma \ref{L12}, $G=A\times P$ where $A\cong\mathbb{Z}_m$, $m$ is odd and $P$ is a 2-group,
$Z(P)$ is cyclic and $|P'|=2.$ Therefore $P$ is of nilpotency class two, with cyclic center.
By Lemma \ref{L3}, $P$ has either the central decomposition:
%\begin{align}\label{Cruc}N:=\{(y_1,\hdots,y_s)\text{ : }(y_i,y_j)\in [M^{t_i}\cap M^{t_j}]^{\Delta_i\times\Delta_j,(1)}\text{ for all }1\le i<j\le s\}. \end{align}
\begin{align*} Q(n_1,r_1)...Q(n_{\alpha}, r_{\alpha}) Q(\ell,\ell)^{\epsilon_{\ell}}...Q(1,1)^{\epsilon_1}, \text{ where } &\alpha \geq 0, \epsilon_i \geq 0, i=1,...,\ell, \\n_1>...> n_{\alpha} > \ell\geq 1, n_{\alpha} > r_1 >...> r_{\alpha} \geq 0, &1 < n_1 - r_1< ...< n_{\alpha}-r_{\alpha}\end{align*}
or
\begin{align*} R(n) Q(\ell,\ell)^{\epsilon_{\ell}} ... Q(1,1)^{\epsilon_1} \text{ where } n\geq \ell\geq 1, \epsilon_{i} \geq 0, i=1,...,\ell.\end{align*}
In the first case either $n\geq 2r$ or $r\leq n <2r$. Since $|P'|=2$ so in $Q(n,r)$, $[a,b]^{2}=1$. Therefore
for $2r\leq n, (a^{2^{n-r}})^2=1$. Hence $a^{2^{n-r+1}}=1$. Since $a$ has order $2^n$, so $n\leq n-r+1$.
Therefore $r\leq 1$. Hence $Q(n,r)\cong Q(n,0)$, where $n>0$ or $Q(n,r)\cong Q(n,1)$, where $n>1$.
Also for $r\leq n <2r,$ $a^{2^r}=([a,b]^{2^{2r-n}}).$ Since $0< 2r-n$ and $[a,b]^2=1$ so $a^{2^r}=1.$
Since $o(a)=2^n$ therefore $n\leq r$. Hence $n=r.$ On the other hand $[a,b]$ has order $2^r$, and therefore $r=0$
or $r=1$. Since $n=r$ and $n>0$ so $r=1$. Hence $Q(n,r)=Q(n,n)=Q(1,1)$.
Hence central decomposition in this case is as $Q(n_1,1)Q(n_2,0)Q(1,1)^{\epsilon}$ or $Q(n_1,1)Q(1,1)^{\epsilon}$
or $Q(n_2,0)Q(1,1)^{\epsilon}$ or $Q(1,1)^{\epsilon}$. In case we have the central decomposition
$Q(n_1,1)Q(n_2,0)Q(1,1)^{\epsilon}$ then $0<n_1-1<n_2-0$ and $n_2<n_1$ which is a contradiction.
Hence $P$ has the following central decomposition:
$Q(n,1)Q(1,1)^{\epsilon}$ or $Q(n,0)Q(1,1)^{\epsilon}$ or $Q(1,1)^{\epsilon}$ where $\epsilon \geq 0 , n\geq 1$.
In $Q(n,1)Q(1,1)^{\epsilon}$ since $1<n-1$ it follows $2<n$. In $Q(n,0)Q(1,1)^{\epsilon}$ if $\epsilon=0$ then
$Q(n,0)Q(1,1)^{\epsilon} \cong  Q(n,0)$ which is abelian group and is a contradiction so $\epsilon \geq 1$. it is clear that
in this case $n>1$.
In Case (2) for $R(n)$, $a^{2^{n}}\neq 1.$ So $[a,b]^{2^{n-1}}\neq 1.$ Therefore $n-1=0$ and $n=1$.
Since $n\geq \ell\geq 1$ so $\ell=1$, $P$ has the central decomposition $R(1)Q(1,1)^{\epsilon}, \epsilon \geq 0$.

2) By Lemma \ref{L12}, $G=A\times P$ where $P$ is a 2-group, $A\cong \mathbb{Z}_{m}$, $m$ is odd,
$Z(P)$ is cyclic and $|P'|=2.$ By Lemma \ref{L11}, $|P|=4|Z(P)|$, therefore $P$ has
at most 3 generators. Hence $P$ has one of the following central decomposition
$Q(n,0)Q(1,1), n>1$ or $Q(n,1), n>2$ or $Q(1,1)$ or $R(1)$.
\end{proof}
\subsection{Proof of Theorem \ref{T3}.}
\begin{proof}
By Proposition \ref{P5} there exist an integer $r>1$ such that $m=|G|/|Z(G)|=2^{r}$.

Conversely let $r>1$ be an integer. By Lemma \ref{L4}, there exist a group $G$ such that $|G|=2^{r}$, $G/Z(G)$
is an elementary abelian $2$-group and $C_G(x)=Z(G)\langle x\rangle$. By \ref{L10}, $G$ is a $\CA$-group and
$[x]_{\sim_1}=[x]_{\sim_2}$ for all $x \in G$. Hence $G$ is an $m$-centralizer group where $m=|G|/|Z(G)|$.
\end{proof}
\subsection{Proof of Theorem \ref{T4}.}
\begin{proof}
1) Suppose that $G$ is not a $\CA$-group and $[x]_{\sim_1}=[x]_{\sim_2},$ for all $x\in G.$ Then there exist $x,y\in G\setminus Z(G)$
such that $[x,y]=1$ and $C_G(x)\neq C_G(y)$. Therefore there exists $w\in G\setminus Z(G)$ so that $[w, x]=1$ and $[w, y]\neq 1$.
Since $\diam(\Gamma(G))=2$, there exists $a\in G\setminus Z(G)$ such that $[a, x]\neq 1$ and $[a, y]\neq 1$.
By Lemma ~\ref{L5}, $G/Z(G)$ is an elementary abelian 2-group. Therefore all classes $[x]_{\sim_2},[y]_{\sim_2},[w]_{\sim_2},[a]_{\sim_2},[xy]_{\sim_2},[xw]_{\sim_2},[xa]_{\sim_2},[yw]_{\sim_2},[ya]_{\sim_2},$\\
$[wa]_{\sim_2}, [xyw]_{\sim_2},[xya]_{\sim_2},[xwa]_{\sim_2},[ywa]_{\sim_2},[axyw]_{\sim_2}$ are distinct.

Let $H=Z(G)\cup[x]_{\sim_2}\cup[y]_{\sim_2}\cup[w]_{\sim_2}\cup[a]_{\sim_2}\cup[xy]_{\sim_2}
\cup[xw]_{\sim_2}\cup[xa]_{\sim_2}\cup[yw]_{\sim_2}\cup[ya]_{\sim_2}\cup[wa]_{\sim_2}\cup[xyw]_{\sim_2}
\cup[xya]_{\sim_2}\cup[xwa]_{\sim_2}\cup[ywa]_{\sim_2}\cup[axyw]_{\sim_2}.$

Therefore $H\unlhd G$ and $|H|=16|Z(G)|$. All above classes dose not commute with at least one classes. So $Z(H)=Z(G)$.

2) Let $|G'|=2$. By Lemma \ref{L9} there exist $x,y\in G$ such that $[x,y]\neq 1$ and
$G=C_G(x)\cup C_G(y)\cup C_G(xy)$. By \cite[Lemma 4]{BBM}, $C_G(x)C_G(y)=G$. On the other
hand $|x^{G}|=|G|/|C_G(x)|=2$. Therefore $|C_G(x)|=\frac{|G|}{2}$ and
$|G|=|C_G(x)C_G(y)|=|C_G(x)||C_G(y)|/|C_G(x)\cap C_G(y)|=(\frac{|G|}{2})(\frac{|G|}{2})/|C_G(x)\cap C_G(y)|$.
Therefore $|C_G(x)\cap C_G(y)|=\frac{|G|}{4}$. By \cite[Lemma 2]{BBM}, $C_G(x)\cap C_G(y)=C_G(x)\cap C_G(y)\cap C_G(xy)$.
Let $K=C_G(x)\cap C_G(y)\cap C_G(xy)$. Therefore $|G|=4|K|$. We claim that $Z(K)=Z(G)$. Suppose by contrary that $Z(K)\neq Z(G)$.
Let $z_k \in Z(K)\setminus Z(G)$. There exist $g\in G\setminus K$ so that $gz_k\neq z_kg$. There are three following cases for $g$.

Case i) $g\in C_G(x)\setminus K.$

Case ii) $g\in C_G(y)\setminus K$ and

Case iii) $g\in C_G(xy)\setminus K$.

First consider Case i).
Let $g\in C_G(x)\setminus K$. By \cite[Lemma 7]{BBM}, $gK=C_G(x)\setminus K$. Let $t\in C_G(x)\setminus K$.
Then there exits $k\in K$ such that $t=gk$. Hence $tz_k=(gk)z_k=g(z_kk)=(gz_k)k\neq (z_kg)k=z_k(gk)=z_kt$.
This implies that $C_G(z_k)\cap [C_G(x)\setminus K]=\phi$. Now there are three subcases as follows:

Subcase a) $z_k$ commutes with all elements of
$(C_G(y)\cup C_G(xy))\setminus K$. We have $C_G(z_k)=[C_G(y)\cup C_G(x)]\setminus K\cup K=C_G(y)\cup C_G(xy)$.
A simple calculation shows that $|C_G(z_k)|=|C_G(y)|+|C_G(xy)|-|C_G(y)\cap C_G(xy)|=\frac{3|G|}{4}$, which is
a contradiction.

Subcase b) There exists an element $h\in C_G(y)\setminus K$ such that $z_kh\neq hz_k$. Hence either $z_k$
commutes with all elements of $C_G(xy)\setminus K$ or does not commute with at least one element of $C_G(xy)\setminus K$.
If $z_kg=gz_k$ for all $g\in C_G(xy)\setminus K$, then $C_G(z_k)=K\cup [C_G(xy)\setminus K]=C_G(xy)$.
Therefore $[z_k]_{\sim_1}=[xy]_{\sim_1}$. By Lemma \ref{L9}, $[z_k]_{\sim_2}=[xy]_{\sim_2}$.
Thus $z_kZ(G)=xyZ(G)$. This implies that there exists $z\in Z(G)$ such that $xy=z_kz$. Hence $(xy)x=(z_kz)x=x(z_kz)=x(xy)$
and therefore $xy=yx$, which is a contradiction.

Subcase c) There exists $g\in C_G(xy)\setminus K$ such that $gz_k\neq z_kg$.
It is easy to see that $z_kg\neq gz_k$ for all $g\in C_G(xy)\setminus K$. Therefore $C_G(z_k)=K$ and
$\frac{|G|}{2}=|C_G(z_k)|=|K|=\frac{|G|}{4}$ which is a contradiction. Finally a similar argument shows that the assumption
that $z_k$ dose not commute with at least one element of $C_G(xy)\setminus K$ gives another contradiction.

This complete the proof in Case i) and similar arguments can be used for the other two cases. Hence $Z(K)=Z(G)$.

For completing the proof it is enough to show that $s>1$. If $K$ is abelian, then
$K=Z(G)$ and we have $|G|=4|Z(G)|$. This implies that $G$ is a $\CA$ group which is not the case.
So $K$ is not abelian. Since $|K'|\leq |G'|=2$ so $|K'|=2$. Substituting $G$ by $K$ in this case, we find a
subgroup $K_2\subseteq K$ such that $|K|=4|K_2|$. Set $K_1=K$. By continuing we obtain a series of subgroups
$Z(G) \subseteq ...\subseteq K_2 \subseteq K \subseteq G$ in such a way that $|K_i|=4|K_{i+1}|$ and $K_1=K$.
Since $G$ is finite there exists a positive integer $s$ such that $K_s=Z(G)$ and $|G|=4^s|Z(G)|=2^{2s}|Z(G)|$.
If $s=1$ then $|G|=4|Z(G)|$ and by Lemma \ref{L11}, $G$ is a $\CA$-group, which is not the case. Hence $s>1$.
\end{proof}

%%%%%%%%%%%%%%%%%%%%%%%%%%%%%%%%%%%%%%%%%%%%%%%%%%%
\section*{Acknowledgement}
The authors thanks Research Deputy of Yazd University for some financial support.

\bibliographystyle{abbrvnat}
\def\cprime{$'$}

\bigskip
\bigskip
{\footnotesize \pn{\bf Mohammad~A.~Iranmanesh}\; \\ {Department of
Mathematics},\\ {Yazd University, 89195-741,} {Yazd, Iran}\\
{\tt Email: iranmanesh@yazd.ac.ir}\\

{\footnotesize \pn{\bf Mohammad Hossein Zareian}\; \\ {Department of
Mathematics},\\ {Yazd University, 89195-741,} {Yazd, Iran}\\
{\tt Email: math01396@gmail.com}\\

\end{document}